\newtheorem{theorem}{Theorem}[section]
\newtheorem{lemma}[theorem]{Lemma}
\newtheorem{proposition}{Proposition}
\theoremstyle{definition}
\newtheorem{definition}[theorem]{Definition}
\newtheorem*{theorem*}{Theorem}
\title[The sandpile identity element] 
      {The sandpile identity element on an ellipse}
\author[Andrew Melchionna]{}
\subjclass{35Q70, 31A05, 05C57}
 \keywords{abelian sandpile model, cellular automata, discrete boundary value problems}
 \email{am2433@cornell.edu}
\thanks{The first author is supported by NSF grant DMS-1455272}
\thanks{$^*$ Corresponding author: Andrew Melchionna}
\begin{document}
\maketitle

\centerline{\scshape Andrew Melchionna$^*$}
\medskip
{\footnotesize
 \centerline{Cornell University Department of Mathematics}
   \centerline{301 Tower Rd.}
   \centerline{Ithaca, NY 14853, USA}
} 

\bigskip

 \centerline{(Communicated by the associate editor name)}

\begin{abstract}
We consider certain elliptical subsets of the square lattice. The recurrent representative of the identity element of the sandpile group on this graph consists predominantly of a biperiodic pattern, along with some noise. We show that as the lattice spacing tends to 0, the fraction of the area taken up by the pattern in the identity element tends to 1.
\end{abstract}

\section{Introduction}

First investigated in the 1980's by Bak, Tang and Wiesenfeld, the abelian sandpile model is governed only by simple local interaction rules, yet demonstrates interesting and well-synchronized behavior on the large scale. The model is as follows. Begin with a function $\sigma : \mathbb{Z}^2 \rightarrow \mathbb{Z}_{\geq 0}$, representing the number of grains of sand on the individual vertices in $\mathbb{Z}^2.$ We will refer to $\sigma$ as the \textit{initial configuration}.  If a site $x \in \mathbb{Z}^2$ has $\sigma(x) \geq 4,$ the site is deemed unstable, and must be 'toppled', in the following manner. Remove 4 grains of sand from the unstable site, and donate them, one each, to the site's 4 nearest neighbors on $\mathbb{Z}^2$. Continue adjusting the sandpile in this manner, performing these toppling moves at unstable sites until every site is stable, i.e. has fewer than 4 grains of sand. Toppling moves occur at successive discrete time steps.

Consider the following sandpile process. Given a finite subset $E \subset \mathbb{Z}^2,$ recall that the outer boundary $\partial E$ is equal to the set of vertices $x$ in $\mathbb{Z}^2 \backslash E$ such that $x$ is adjacent to $y$ for some $y$ in $E$. Initialize the sandpile with 0 grains of sand everywhere on $E$. We allow the outer boundary to be an infinite source of sand, in the following sense. For any fixed positive integer $n$, topple the outer boundary $n$ times, allowing the sites in $E$ which are adjacent to the boundary to accumulate sand from these toppling moves, and disregarding the sand which accumulates at any points not lying in $E$. Once the outer boundary has toppled $n$ times, a (possibly unstable) configuration has formed in $E$; in particular, the neighbors of the outer boundary which lie in $E$ have accumulated some sand, while the rest of the sites in $E$ still do not have any grains of sand on them (and we ignore any sand outside of $E$). Now proceed to stabilize the sandpile on $E$ in the usual way, ignoring any grains of sand which leave $E$. We emphasize that, after the $n$ topplings of the outer boundary, we do not ever topple sites in $E^c$ (the complement of $E$) again. One can think of $E$ as a tabletop; when a grain of sand falls off of $E$, it is lost forever. Note that the above discussion of stabilizing an unstable sandpile relies on the so-called 'abelian property' of the model, which states that any reasonable sequence of stabilizing toppling moves ultimately leads to the same unique stable configuration. This construction is made precise in  \hyperref[Preliminaries]{section 2}.

\begin{figure}
\label{patternfigure}
 \centering
  \subcaptionbox{}{\includegraphics[width=2in]{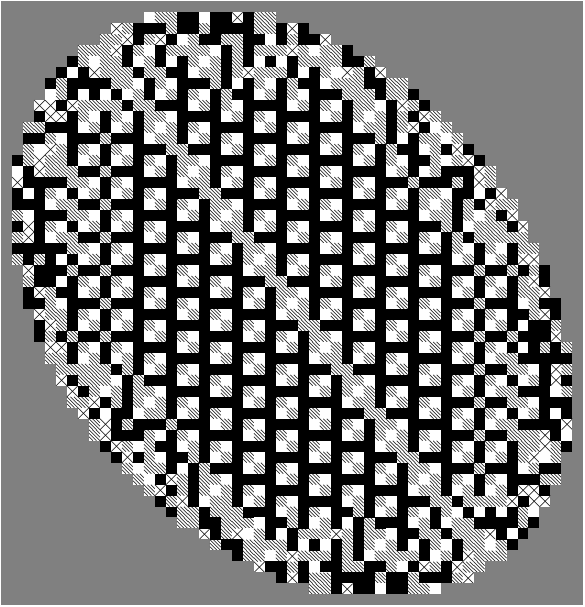}}\hspace{1.5em}
  \subcaptionbox{}{\includegraphics[width=2in]{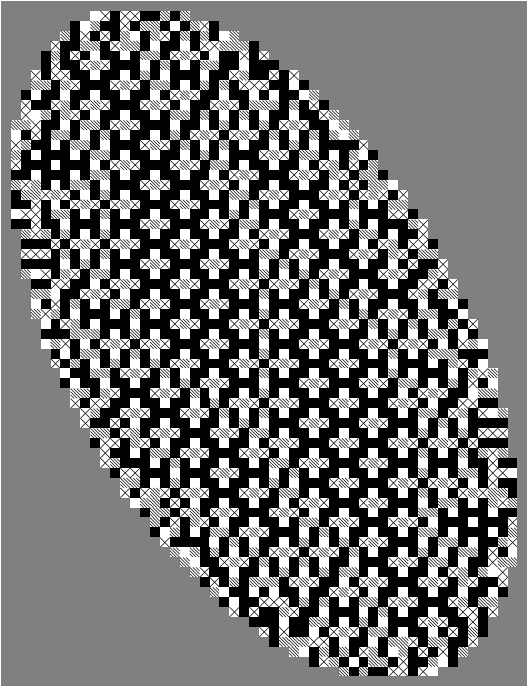}}\hspace{-0.8em} \newline
  \subcaptionbox{}{\includegraphics[width=2in]{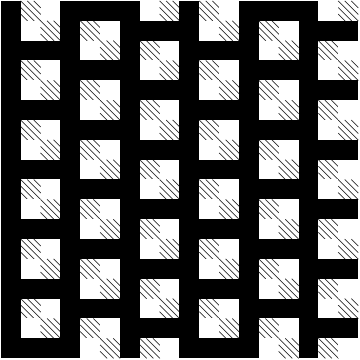}}\hspace{1.5em}
\subcaptionbox{}{\includegraphics[width=2in]{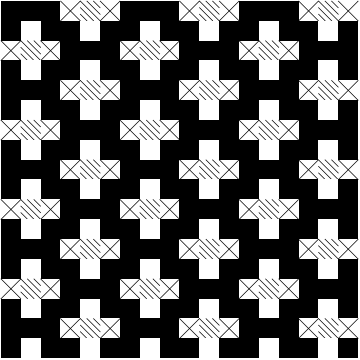}}\hspace{1em}

\caption{Figures 1a) and 1b) correspond to the identity elements for the ellipses $E_{A,k}$, with } $A = \begin{pmatrix}\frac{10}{9} & \frac{1}{3} \\[0.15cm] \frac{1}{3} & 1\end{pmatrix}$ and $\begin{pmatrix}\frac{4}{3} & \frac{1}{2} \\[0.15cm] \frac{1}{2} & \frac{3}{4}\end{pmatrix},$ respectively, with $k = 18^2$ in both. Figures 1c) and 1d) show the patterns $p_A$ corresponding to the ellipses in 1a) and 1b) respectively.  A black square represents a vertex with 3 grains of sand; dark patterned (parallel lines), 2; light patterned (cross), 1; white, 0.
\end{figure}

It is a fact \cite{corry2018divisors} that, given $E \subset \mathbb{Z}^2,$ there exists a positive integer $N$ such that the stabilized sandpile resulting from the above process above will be identical for all values of $n \geq N.$ This stable sandpile plays a crucial role, which we now illustrate. Consider the above process, in which we first topple the boundary $N$ times, and then stabilize the interior of $E$. Call the resulting sandpile configuration $e$. If we now use $e$ as an initial configuration and topple the boundary $k$ (with $k$ an arbitrary positive integer) more times, and then stabilize the interior of $E$ again, then every site in $E$ will topple exactly $k$ times, and the resulting stable sandpile will again be the configuration $e$. More generally, let $r: E \rightarrow \mathbb{Z}_{\geq 0}$ be any initial sandpile configuration. Topple the boundary $k \geq 0$ times and perform the subsequent stabilization. If every site in $E$ topples $k$ times in the stabilization and the resulting stable sandpile is again $r$ (and this holds for all $k \geq 0$), then we call the configuration $r$ \textit{recurrent}. The set of recurrent sandpiles on $E$ form a group under the operation of vertex-wise addition followed by stabilization. The stable sandpile $e$ is the identity element of this group, with the property that for any recurrent sandpile $r$, $\mathcal{S}(e + r) = r,$ where $+$ denotes vertex-wise addition, and $\mathcal{S}$ denotes the stabilization.  The identity element can be shown to satisfy a certain discrete boundary value problem, described in  \hyperref[BVP]{section 3}.

The goal of this paper is to characterize the identity elements of various elliptical subsets of $\mathbb{Z}^2$. That is, for a certain set of $2 \times 2$ symmetric matrices $\Gamma^+ \subset \text{Sym}(\mathbb{R}_{2\times 2})$ (defined in \hyperref[intsup]{section 2.3}) we consider the identity element on the graph 
\[
E_{A,k} = \{x \in \mathbb{Z}^2 : \frac{1}{2} x^T A x < k\},
\]
where $A \in \Gamma^+$ and $k >0$ is a real number. We show that the identity element predominantly features a biperiodic pattern $p_A$ associated to the matrix $A$ (this association is established in \hyperref[prop6]{Proposition 6}). \hyperref[patternfigure]{Figure 1} shows the identity elements for various $E_{A,k},$ along with the associated patterns $p_A.$ Note that the identity elements also  have some noise near the boundary of the ellipse and some one-dimensional defects in the interior of the ellipse. Our main result shows that for $A \in \Gamma^+$, the fraction of the area inside the ellipse $E_{A,k}$ which conforms to this periodic pattern tends to 1 in the limit of $k \rightarrow \infty$ (see  \hyperref[mainfigure]{Figure 2}). Our proof adapts the estimates developed by Pegden and Smart in \cite{pegden2020stability}. While \cite{pegden2020stability} characterizes a sandpile on a square subset of $\mathbb{Z}^2$ in which many different periodic patterns appear, we consider a sandpile on an elliptical geometry which isolates a single pattern. The main ingredient which is specific to the elliptical geometry is \hyperref[Andrewlemma]{Lemma 4.4}. Following \cite{pegden2020stability}, we define a point $x \in E_{A,k}$ to be $r$-good if the sandpile identity element of $E_{A,k}$ matches some translation of the pattern $p_A$ in the set $B_r(x) \cap \mathbb{Z}^2$ (see \hyperref[RGoodnessFigure]{Figure 7}). 

We now state a weak version of our \hyperref[theorem:mainresult]{Theorem 4.3}, which is the main result of this paper. 

\begin{theorem*}
Fix $A \in \Gamma^+.$ Fix $r$ to be a sufficiently large constant, dependent on $A$ (but not on $k$). Of the points which are distance at least $r$ from the boundary of the ellipse given by $A$ in $\mathbb{R}^2$, let $f(k,A)$ be the fraction of these points which are NOT r-good in the identity element of $E_{A,k}$. We then have that 
\[
\limsup_{k \rightarrow \infty} f(k,A) \cdot k^{1/4} \leq C_A,
\]
where $C_A$ is a constant depending on the matrix $A$. In particular, 
\[
\lim_{k \rightarrow \infty} f(k,A) = 0.
\]
\end{theorem*}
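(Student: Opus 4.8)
The plan is to work with the odometer of the identity element, reduce $R$-goodness to a statement about that odometer on a bounded ball, and then bound the size of the set where the statement fails by a quantitative comparison with the continuum limit.

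\emph{Step 1 (reduction to the odometer).} Let $u_k\colon E_{A,k}\to\mathbb Z_{\ge 0}$ be the odometer of the stabilization producing the identity element $e$, so that $e=\sigma_0+\Delta u_k$ where $\sigma_0$ is supported in an $O(1)$-collar of $\partial E_{A,k}$ and $u_k$ is the least-action solution of the discrete boundary value problem of Section~3. Using that problem together with Proposition~6, I would first reduce $R$-goodness of an interior point $x$ to a local condition on $u_k$ near $x$: if $u_k$ agrees on $B_{R'}(x)$, for a suitable $R'=R'(R,A)$, with a translate of the ``pattern odometer'' $o_A$ associated to $p_A$ via Proposition~6 up to an integer affine function---equivalently, if the second differences of $u_k$ on $B_{R'}(x)$ coincide with the rational matrix $H_A$ realised by $p_A$---then $e$ matches a translate of $p_A$ on $B_R(x)$, i.e.\ $x$ is $R$-good. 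The passage from ``the odometer has approximately the right local shape'' to ``the configuration is exactly a translate of $p_A$'' is where the pattern-stability result of \cite{pegden2020stability} is used. It then suffices to bound the number of interior lattice points at which $u_k$ fails this local description.

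\emph{Step 2 (the continuum quadratic, made quantitative).} After rescaling by $\sqrt k$, the functions $\bar u_k(y)=k^{-1}u_k(\sqrt k\,y)$ converge uniformly on the fixed ellipse $\{\tfrac12 y^{T}Ay<1\}$ to the solution of the associated continuum obstacle problem; and---this is precisely what the hypothesis $A\in\Gamma^+$ provides---that solution is an explicit quadratic with constant Hessian $H_A$. Equivalently, in the original variables, $u_k(x)=\tfrac12 x^{T}H_A x+m_k+o(k)$ uniformly on $\{\,x:\tfrac12 x^{T}Ax\le(1-\delta)k\,\}$ for each $\delta>0$. The core of the argument is to upgrade this to a rate. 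I would build explicit discrete sub- and super-solutions of the identity boundary value problem by perturbing $o_A$ by lower-order corrections designed to absorb both the boundary collar $\sigma_0$ and the $O(1/\sqrt k)$ curvature of $\partial E_{A,k}$, and then use the comparison principle inherent in the least-action formulation to sandwich $u_k$ between them. The expected outcome is a bound $\lVert u_k-\bigl(\tfrac12 x^{T}H_A x+m_k\bigr)\rVert_{\infty}=O_A\bigl(k^{3/4}\bigr)$ away from the collar, with the set where the second differences of $u_k$ differ from $H_A$ controlled by this error. (Any polynomial rate already gives $f(k,A)\to 0$; the exponent $3/4$ is what yields the quantitative form.)

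\emph{Step 3 (size of the exceptional set).} Put $w_k=u_k-o_A$. Wherever $e$ equals a translate of $p_A$ one has $\Delta w_k=e-p_A$, a mean-zero lattice-periodic function, so there $w_k$ is a discrete-harmonic function plus a bounded periodic term; the obstruction to $u_k$ matching a single translate of $p_A$ over a large region is that the ``effective affine part'' it must carry, namely $\nabla u_k(x)-H_A x=\nabla w_k(x)$, varies with $x$, whereas across a region using a fixed translate this part changes only by vectors of a fixed lattice. The $O(k^{3/4})$ bound of Step~2 forces $\nabla w_k$ to range over a set of diameter $O(k^{1/4})$, so the number of one-dimensional fault lines along which the translate must change is $O_A(k^{1/4})$; each has length $O(\sqrt k)$, so the lattice points within $R'$ of some fault line number $O_{A,R}(k^{3/4})$. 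Together with a near-boundary layer in which the pattern has not yet formed---whose thickness the comparison of Step~2 controls so that it too contributes $O_{A,R}(k^{3/4})$ points, while the excluded $O(R)$-collar is of still lower order---this bounds the number of non-$R$-good interior lattice points by $O_{A,R}(k^{3/4})$. Since $\lvert E_{A,k}\rvert\asymp k$, dividing gives $f(k,A)=O_{A,R}(k^{-1/4})$, and $f(k,A)\to 0$ follows at once.

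\emph{Where the difficulty lies.} The qualitative convergence of $\bar u_k$ to the limiting quadratic is by now standard, so essentially all the work is in Steps~2 and 3. The crux is the quantitative comparison: one must construct barriers from $o_A$ that are \emph{exact} discrete sub- and super-solutions up to a carefully tracked, genuinely small error---this is the one place the hypothesis $A\in\Gamma^+$ is indispensable, since it is what makes $o_A$ an honest local solution rather than merely an approximate one---and one must then show the discrete structure cannot accumulate more than $O(k^{1/4})$ worth of ``gradient rounding'', together with the matching bound on the boundary-layer thickness. Secondary technical points are making the stability input of \cite{pegden2020stability} uniform over all balls $B_{R'}(x)$ and controlling the interaction of the interior fault lines with the genuine boundary noise.
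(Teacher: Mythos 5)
Your Step 2 is in the right spirit but is not how the paper obtains its quantitative input, and your Step 3 contains the real gap. On Step 2: the paper does not construct sub- and super-solution barriers. It observes that both the BVP solution $v$ and the witness $o_A$ are \emph{recurrent functions} on $E_{A,k}$ (propositions 5 and 8, and the discussion in section 2.3), and recurrence gives a two-sided maximum principle $\sup_{E_{A,k}}|v-o_A|\le\sup_{\partial E_{A,k}}|v-o_A|$; since $v\equiv k$ on $\partial E_{A,k}$ and $o_A=q_A+L+p+c$ with $q_A$ between $k$ and $k+O(\sqrt k)$ there, an elementary boundary computation gives $\sup|v-o_A|\le h^2(A)\sqrt k+o(\sqrt k)$ (lemma 4.2) --- a better rate than your conjectured $O(k^{3/4})$, obtained with no barrier construction. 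Note also that the least action principle only bounds the odometer from above by supersolutions; the lower bound genuinely requires the recurrence of $v$, which your sketch never invokes.

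The gap is Step 3. From a sup-norm bound on $w_k=u_k-o_A$ you infer that the defect set is a union of $O(k^{1/4})$ one-dimensional fault lines of length $O(\sqrt k)$. Nothing in the sup-norm bound forces this: a function with $\|w_k\|_\infty=O(k^{3/4})$ can have second differences deviating from those of every translate of $o_A$ on a positive fraction of sites, and the gradient estimate $|\nabla w_k|\lesssim \|w_k\|_\infty/\sqrt k$ you implicitly use fails for functions that merely have bounded Laplacian (the interior estimate carries an additive term of order the domain radius, here $O(\sqrt k)$). The one-dimensionality and sparsity of the defects is exactly the empirical phenomenon the theorem is meant to capture; assuming it is assuming the hard part. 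The paper avoids needing any structural information about the defect set by using the touching-map device of [\cite{pegden2020stability}]: for each $y$ in the bulk, the paraboloid-lowered witness $\phi_y(x)=o_A(x)-\tfrac12\frac{|V|^2}{(2CR)^2}|x-y|^2$ must, by the $O(\sqrt k)$ bound of lemma 4.2, have $v-\phi_y$ attain its minimum at some $x_y$ within distance $O(k^{1/4})$ of $y$; lemma 4.3 then certifies that $x_y$ is good, and a weak-injectivity count of the touching map's range, combined with the continuum area estimates of lemmas 4.5 and 4.6, shows the non-good points occupy at most an $O(k^{1/4})$-thick layer's worth of area out of total area $O(k)$. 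To repair your proposal you would need either to prove the fault-line structure (which the paper does not do and which appears hard) or to replace Step 3 by a mechanism, such as the touching map, that converts the sup-norm bound directly into a count of good points.
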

\begin{figure}
\label{mainfigure}
 \centering
  \subcaptionbox{$k = 8^2$}{\includegraphics[width=1.54in]{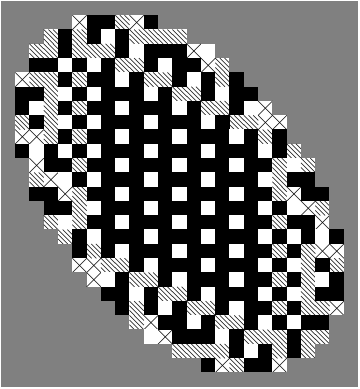}}\hspace{1em}%
  \subcaptionbox{$k = 16^2$}{\includegraphics[width=1.535in]{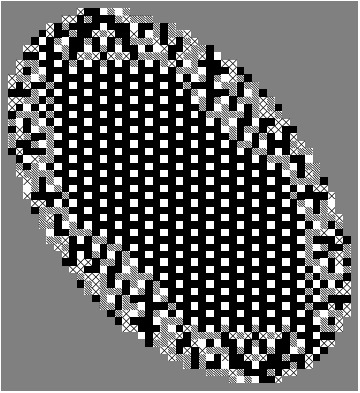}}\hspace{1em}%
  \subcaptionbox{$k = 24^2$}{\includegraphics[width=1.495in]{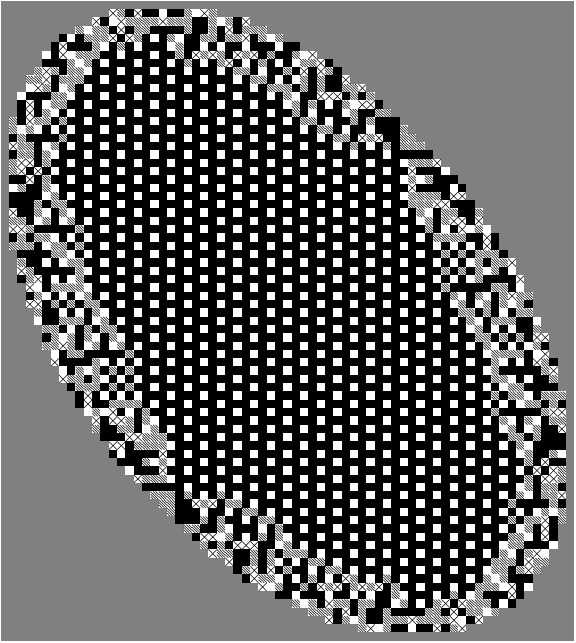}}\hspace{1em}%
\caption{The identity elements for the ellipses given by $E_{A,k}$, with} $A = \begin{pmatrix}\frac{5}{4} & \frac{1}{2} \\[0.15cm] \frac{1}{2} & 1\end{pmatrix}$ and various $k$. A black square represents a vertex with 3 grains of sand; dark patterned (parallel lines), 2; light patterned (cross), 1; white, 0. 
\end{figure}
The remainder of the paper is structured as follows: in section 2, we lay out the preliminaries of the abelian sandpile model and the Apollonian structure of the growth rates attainable by odometer functions. In section 3, we introduce the discrete boundary value problem which the stable sandpile solves. In section 4, we prove our main result, and discuss an extension of the theorem to the case of an uncentered ellipse. That is, we show that this theorem still holds (but now with different constant $C_A$) for the identity element of $E^p_{A,k},$ with this graph given by

\begin{figure}
\label{offcenterfigure}
 \centering
  \subcaptionbox{}{\includegraphics[width=2.04in]{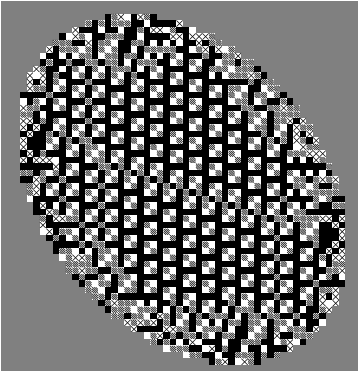}}\hspace{1.5em}%

\caption{The identity element for the graph $E^p_{A,k},$ with} $A = \begin{pmatrix}\frac{10}{9} & \frac{1}{3} \\[0.15cm] \frac{1}{3} & 1\end{pmatrix}$, $k=18^2,$ and $p = (.47,.5).$ 
\end{figure}
\[
E^p_{A,k} = \{x \in \mathbb{Z}^2 | \frac{1}{2} (x-p)^TA(x-p) < k \}
\]
for $p \in \mathbb{R}^2$. \hyperref[offcenterfigure]{Figure 3} shows the identity element for the graph $E^p_{A,k},$ with $p = (.47,.5),$ and with the same $A$ and $k$ values as found in Figure 1a.

\section{Preliminaries}
\label{Preliminaries}
\subsection{The sandpile on $\mathbb{Z}^2$}

Given an initial configuration $\sigma_0 :\mathbb{Z}^2 \rightarrow \mathbb{Z}_{\geq 0},$ let the (finite or infinite) sequence $(x_1,x_2,...)$ with $x_i \in \mathbb{Z}^2$ represent a sequence of toppling moves, with the vertex $x_i$ being toppled in the ith timestep. We demand that all vertices in $\mathbb{Z}^2$ are toppled finitely many times. Let the $\textit{odometer function}$ corresponding to the sequence $(x_1,x_2,...)$ be a function $u:\mathbb{Z}^2 \rightarrow \mathbb{Z}_{\geq 0}$ which counts the number of times each site in $\mathbb{Z}^2$ appears in the sequence. Note that, since $u(x)$ is finite for all $x \in \mathbb{Z}^2,$ we have that the toppling sequence gives a well-defined $\textit{final configuration}$, given by $\sigma_0(x) + \Delta u (x)$, where $\Delta$ is the discrete Laplacian operator, acting on functions with domain $\mathbb{Z}^2$. It is given by
\[
\Delta w (x) = \sum_{y \sim x} (w(y) - w(x) \big) = -4 w(x) + \sum_{y \sim x} w(y).
\]

The toppling sequence is called $\textit{legal}$ if a toppling move is only made when a vertex has 4 or more chips, and is called $\textit{stabilizing}$ if the resulting final configuration has fewer than 4 chips at every site on $\mathbb{Z}^2$. A configuration $\sigma$ is called $\textit{stabilizable}$ if it admits a stabilizing toppling sequence. A foundational result, justifying the the word 'abelian' in the name of the model, is that any two legal, stabilizing configurations give the same odometer function, and thus the same final sandpile:

\begin{proposition}[Abelian Property] \cite{fey2010growth}
Let $\sigma$ be an initial configuration, and suppose that there exists a stabilizing sequence $(x_1,...,x_n).$ Then there exists a legal stabilizing sequence, and any two legal stabilizing seqences are permutations of each other.
\end{proposition}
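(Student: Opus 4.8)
The statement to prove is the Abelian Property: given an initial configuration $\sigma$ admitting some stabilizing sequence $(x_1,\dots,x_n)$, there exists a \emph{legal} stabilizing sequence, and any two legal stabilizing sequences are permutations of each other (in particular they yield the same odometer and the same final configuration).

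\medskip

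\noindent\textbf{Plan.} The argument splits into two halves: a ``least-action'' comparison lemma, and a strong-convergence / exchange argument.

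\emph{Step 1 (Least Action Lemma).} First I would show that any legal sequence is dominated, vertex-by-vertex, by any stabilizing sequence. Concretely: let $(y_1,y_2,\dots)$ be a legal sequence (not assumed stabilizing or even finite) with odometer $v$, and let $(x_1,\dots,x_n)$ be a stabilizing sequence with odometer $u$. I claim $v(x)\le u(x)$ for every $x\in\mathbb{Z}^2$. The proof is by induction on the length of the legal sequence: suppose the first $m$ legal topplings satisfy $v_m \le u$ but the $(m{+}1)$st toppling, at a site $z$, would violate this, i.e. $v_m(z)=u(z)$. Since $z$ was legally topplable after $m$ steps, its height $\sigma(z)+\Delta v_m(z)\ge 4$. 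But $\Delta v_m(z) = -4v_m(z)+\sum_{w\sim z}v_m(w) \ge -4u(z) + \sum_{w\sim z}u(w) = \Delta u(z)$ because $v_m(z)=u(z)$ and $v_m(w)\le u(w)$ for neighbors $w$ — wait, the inequality goes the right way since increasing a neighbor's count only helps. Hence $\sigma(z)+\Delta u(z)\ge \sigma(z)+\Delta v_m(z)\ge 4$, contradicting that $u$ is the odometer of a \emph{stabilizing} sequence, whose final configuration is everywhere $<4$. Therefore no such violation occurs and $v\le u$ pointwise.

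\emph{Step 2 (A legal stabilizing sequence exists and all legal stabilizing sequences agree).} Run any maximal legal sequence. By Step 1 its odometer is bounded above by $u$, which is finite and has finite support (a stabilizing sequence is finite by hypothesis), so the legal sequence is finite; being maximal, it is stabilizing. Now if $(y_i)$ and $(y'_i)$ are two legal stabilizing sequences with odometers $v,v'$, apply Step 1 twice — once with $(y_i)$ legal and $(y'_i)$ stabilizing, once with roles reversed — to get $v\le v'$ and $v'\le v$, hence $v=v'$. Equal odometers give equal final configurations via $x\mapsto \sigma(x)+\Delta v(x)$.

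\emph{Step 3 (Permutation equivalence).} Having $v=v'$, each vertex is toppled the same number of times in both sequences; what remains is that the two orderings are related by a permutation, which is immediate once we know the multiset of topplings coincides — but to make this an honest ``permutation'' statement I would invoke the standard exchange/confluence argument: if at some point two legal sequences first diverge, the set of currently-topplable sites forms a ``diamond'' (any two legal moves available now commute, since toppling $a$ cannot disable a legal move at $b\ne a$ — it only adds chips to $b$), so one reorders to a common prefix; iterating, and using that both sequences have the same finite odometer, yields that one is a permutation of the other. This is essentially a Newman's-lemma / local-confluence argument on the rewriting system of toppling moves.

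\medskip

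\noindent\textbf{Main obstacle.} The conceptual crux is Step 1, the Least Action Lemma, and in particular getting the induction bookkeeping right when the legal sequence is \emph{a priori} infinite — one must be careful that ``$v_m\le u$ for all finite $m$'' passes to the limit and simultaneously forces finiteness, rather than assuming finiteness from the start. Once monotonicity is in hand, existence, uniqueness of the odometer, and the permutation statement follow by soft arguments; the permutation claim itself only needs the elementary observation that legal topplings at distinct sites commute, so the diamond/confluence step is routine.
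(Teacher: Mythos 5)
The paper does not prove this proposition; it is quoted verbatim from \cite{fey2010growth} as a known result, so there is no internal proof to compare against. Your argument is correct and is in fact the standard proof from that reference: the least-action monotonicity lemma (any legal odometer is dominated pointwise by any stabilizing odometer, established by looking at the first putative violation), from which existence, finiteness, and equality of odometers all follow, and equality of odometers is precisely the statement that the two sequences are permutations of each other. One small slip: in Step 1 the displayed inequality should read $\Delta v_m(z) \le \Delta u(z)$ (since $v_m(z) = u(z)$ and $v_m(w) \le u(w)$ for $w \sim z$), which is what your final chain $\sigma(z) + \Delta u(z) \ge \sigma(z) + \Delta v_m(z) \ge 4$ actually uses; and the Newman's-lemma digression in Step 3 is unnecessary, since equal odometers already give the permutation claim as stated.
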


With the abelian property in mind, it is then natural to define an odometer function $u:\mathbb{Z}^2 \rightarrow \mathbb{Z}_{\geq 0}$ corresponding to a stabilizable configuration $\sigma,$ letting $u(x)$ represent the number of times a site x topples in a legal, stabilizing sequence. Given a stabilizable initial configuration $\sigma,$ we can then write the final, stable sandpile configuration $s : \mathbb{Z}^2 \rightarrow \mathbb{Z}_{\geq 0}$ in the following way:
\[
s(x) = \sigma(x) + \Delta u (x).
\]
By the abelian property, the odometer function $u$ corresponding to $\sigma$ is well-defined. The odometer function can be proven to satisfy the following 'least action principle':

\begin{proposition}[Least Action Principle] \cite{fey2010growth}
Let $u(x)$ be the odometer function for a stabilizable initial configuration $\sigma.$ Then $u(x)$ satisfies
\[
u(x) = \inf \{w(x) \, | \, w: \mathbb{Z}^2 \rightarrow \mathbb{Z}_{\geq 0}, \, \sigma (y) + \Delta w(y) \leq 3 \quad \forall  y \in \mathbb{Z}^2 \}.
\]
\end{proposition}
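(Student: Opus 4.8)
The plan is to establish the two inequalities separately. On one side, the odometer $u$ is itself a feasible competitor in the infimum: it is nonnegative by construction, and its final configuration $s = \sigma + \Delta u$ is stable, so $\sigma(x) + \Delta u(x) = s(x) \le 3$ for every $x$; hence $\inf\{w(x) \mid w : \mathbb{Z}^2 \to \mathbb{Z}_{\ge 0},\ \sigma + \Delta w \le 3\} \le u(x)$ at every $x$. The real content of the proposition is the reverse inequality, which I would phrase as: \emph{every feasible $w$ dominates $u$ pointwise}. This makes $u(x)$ a lower bound for the set, so the infimum equals $u(x)$ and is in fact attained.

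To prove $u \le w$ pointwise, I would fix a legal stabilizing toppling sequence $(x_1, x_2, \dots)$ for $\sigma$ — one exists, and yields exactly the odometer $u$, by the abelian property (Proposition 1) — and induct on the number of toppling moves performed. For $t \ge 0$ let $u_t : \mathbb{Z}^2 \to \mathbb{Z}_{\ge 0}$ count how many times each vertex appears among $x_1, \dots, x_t$, so that $u_0 \equiv 0$ and $u_t \uparrow u$ pointwise as $t \to \infty$. The inductive claim is that $u_t \le w$ pointwise for all $t$; the base case $t = 0$ holds because $w \ge 0$.

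For the inductive step, suppose $u_t \le w$ and let $x = x_{t+1}$ be the vertex toppled at step $t+1$; since only the value at $x$ increases, and by exactly $1$, it suffices to show $u_t(x) < w(x)$. Suppose instead that $u_t(x) = w(x)$. Then, because $u_t(y) \le w(y)$ at each neighbor $y \sim x$, monotonicity of the discrete Laplacian gives $\Delta u_t(x) \le \Delta w(x)$, so
\[
\sigma(x) + \Delta u_t(x) \ \le\ \sigma(x) + \Delta w(x) \ \le\ 3.
\]
But $\sigma + \Delta u_t$ is precisely the configuration present after the first $t$ topplings, and legality of the sequence forces it to be at least $4$ at $x$ — a contradiction. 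Hence $u_t(x) < w(x)$, i.e.\ $u_{t+1}(x) = u_t(x) + 1 \le w(x)$, completing the induction. Letting $t \to \infty$, and using that $u(x) < \infty$ because $\sigma$ is stabilizable, yields $u(x) \le w(x)$ for all $x$, as required.

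The only substantive step is the inductive step, and I expect it to carry the whole argument: the point is that legality of the toppling at $x$ is incompatible with $u_t$ having already ``caught up'' to $w$ at $x$, precisely because $\sigma + \Delta w \le 3 < 4$ there. Everything else — the membership of $u$ in the feasible set, the monotonicity of the partial odometers, and the passage to the limit — is routine bookkeeping, and the only external input needed is the abelian property, which is what makes the odometer $u$ well-defined in the first place.
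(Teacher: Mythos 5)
Your proof is correct. Note, however, that the paper does not prove this proposition at all: it is imported as a known result with a citation to Fey--Levine--Peres, so there is no in-paper argument to compare against. Your argument is the standard one from that literature. The easy direction (feasibility of $u$ itself, since $\sigma + \Delta u$ is the stable final configuration) and the hard direction (every feasible $w$ dominates $u$, proved by induction along a legal stabilizing sequence, using that legality of the toppling at $x_{t+1}$ forces $\sigma(x_{t+1}) + \Delta u_t(x_{t+1}) \geq 4$ while equality $u_t(x_{t+1}) = w(x_{t+1})$ together with $u_t \leq w$ elsewhere would force it to be $\leq 3$) are both handled correctly, including the integrality point that $u_t(x) < w(x)$ gives $u_t(x) + 1 \leq w(x)$, and the passage to the limit for an infinite toppling sequence. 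The only external input you use is the abelian property (Proposition 2.1 of the paper), which guarantees that a legal stabilizing sequence exists and realizes the odometer $u$; that is exactly the right dependency.
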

This proposition states that, during a stabilizing process, each vertex will topple as few times as necessary in order to stabilize the sandpile.

\subsection{The sandpile on a general graph}

Consider now a finite, connected, undirected multigraph $G = (V \cup \{q\},E)$, where $q$ is a \textit{sink vertex}. A sandpile configuration $s: V \rightarrow \mathbb{Z}$ is called \textit{stable} if $s(v) < \deg(v)$ for all $v$ in $V$. Define the graph Laplacian $\Delta_G$ similarly as above: for any integer-valued function on the vertices $w: V\cup\{q\} \rightarrow \mathbb{Z}$, we define the graph Laplacian $\Delta_G$ as
\[
\Delta_G w (x) = \sum_{y \sim x} \big( w(y) - w(x) \big),
\]
where $y \sim x \Leftrightarrow xy \in E.$ By enumerating the vertices in $V \cup \{q\},$ we can think of a function on the graph as a vector (with components corresponding to the value of the function on each vertex), and the graph Laplacian $\Delta_G$ as a matrix acting on these vectors. This matrix can be written as
\[
\big(\Delta_G\big)_{ij} =
\begin{cases}
-\deg{i} & i = j \\
M(i,j) & i \neq j
\end{cases}
\]
where $M(i,j)$ is the multiplicity of the edge connecting $i$ and $j,$ with $M(i,j) = 0$ if $ij \notin E.$

With this construction in mind, we define the reduced graph Laplacian $\tilde{\Delta}_G$ as the matrix obtained from $\Delta_G$ by deleting the row and column corresponding to the sink vertex $q.$

The set of sandpile configurations on $G$ enjoy a group structure, in the following sense. Consider the free abelian group $\mathbb{Z}^V,$ corresponding to the set of all possible sandpile configurations (allowing for negative amounts of chips) on the nonsink vertices, with vertex-wise addition. Consider also the equivalence relation on $\mathbb{Z}^V$ given by $w \sim u$ if and only if there exists a function $v: V \rightarrow \mathbb{Z}$ such that $w = u + \tilde{\Delta}_G v.$ $v(x)$ can be seen as the number of times that the vertex $x$ needs to be toppled in order to get from the configuration $u$ to the configuration $w.$ Note that we are allowing $v$ to take on negative values, corresponding to 'untopplings'. It can be shown that this defines an equivalence relation on $\mathbb{Z}^V.$ It can also be seen that this equivalence relation respects vertex-wise addition of two sandpiles. Thus, the quotient $\mathbb{Z}^V/\sim$ with the operation of vertex-wise addition is a group, called the $\textit{sandpile group}$ of the graph $G.$ 

We next define the notion of recurrency of a sandpile configuration.

\begin{definition}
Let $s: V \rightarrow \mathbb{Z}$ be a sandpile configuration.  Given a nonempty subset $X \subset V$, and an element $x \in X,$ define $\textbf{indeg}_X(x)$ to be the number of nearest neighbors of x which lie in X. A $\textbf{forbidden subconfiguration}$ is a nonempty subset $X \subset V$ such that, for all $x \in X,$ $s(x) < \text{indeg}_X(x)$. The sandpile $s$ is called $\textbf{recurrent}$ if it possesses no forbidden subconfigurations.
\end{definition}

The name 'recurrent' comes from the study of Markov chains on the space of sandpile configurations. In this framework, these configurations are recurrent in the sense that $\mathbb{P}^{s} (\# \{n \in \mathbb{N} : s_n = s\} = \infty ) = 1 $, where $(s_n)_{n \in \mathbb{N}}$ is a Markov chain on the space of stable sandpiles on $G$ which evolves by dropping a grain of sand uniformly at random on a nonsink vertex and stabilizing. $\mathbb{P}^{s}$ is the measure given by starting the Markov chain at $s_0 = s.$ We make use of the following standard fact linking recurrent configurations and the sandpile group:

\begin{proposition} \cite{jarai2014sandpile}
Every equivalence class in the sandpile group contains exactly one recurrent configuration.
\end{proposition}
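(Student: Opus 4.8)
The plan is to establish the two halves of the statement separately: that every equivalence class of $\mathbb{Z}^V/\tilde{\Delta}_G\mathbb{Z}^V$ contains at least one recurrent configuration, and that it contains at most one.

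For existence, I would start from an arbitrary $c \in \mathbb{Z}^V$ and first replace it by a nonnegative representative of its class. Since $G$ is connected with sink $q$, the reduced Laplacian $\tilde{\Delta}_G$ is invertible; writing $D = |\det \tilde{\Delta}_G|$ and $g = -\tilde{\Delta}_G^{-1}\mathbbm{1}$, Cramer's rule makes $Dg$ integral, and $\tilde{\Delta}_G(-Dg) = D\mathbbm{1}$, so $c \sim c + ND\mathbbm{1}$, which is nonnegative once $N$ is large. A nonnegative configuration on a finite graph with a sink stabilizes, and each legal toppling changes a configuration by a column of $\tilde{\Delta}_G$; hence the stabilization $s$ of $c + ND\mathbbm{1}$ is a \emph{stable} configuration lying in $[c]$. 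Finally I would pass from $s$ to a recurrent configuration without leaving the class: let $\beta = -\tilde{\Delta}_G\mathbbm{1}_V \ge 0$ be the vector of chips emitted by one toppling of the sink; then $\beta = \tilde{\Delta}_G(-\mathbbm{1}_V) \sim 0$, and by Dhar's burning procedure [\cite{jarai2014sandpile}] iterating the map $s \mapsto (s+\beta)^{\circ}$, which stays inside $[c]$, reaches a recurrent configuration after finitely many steps. (Equivalently, one may cite that the recurrent configurations form a group under $s \oplus s' = (s+s')^{\circ}$ and that $r \mapsto [r]$ is a homomorphism whose image contains every $[\delta_x]$ and is therefore all of the sandpile group.)

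For uniqueness, suppose $r_1, r_2$ are recurrent with $r_2 = r_1 + \tilde{\Delta}_G v$; note that recurrent configurations are in particular stable, so $0 \le r_i(x) \le \deg_G(x)-1$. Extend $v$ to $V\cup\{q\}$ by $v(q)=0$ and subtract its minimum to obtain $w \ge 0$ with $\min w = 0$; since the full Laplacian $\Delta_G$ annihilates constants, $\Delta_G w$ restricted to $V$ still equals $\tilde{\Delta}_G v$, i.e. $r_2 = r_1 + (\Delta_G w)|_V$. Assume $w \not\equiv 0$, so $w$ is nonconstant and $w(q)$ cannot be simultaneously equal to $\max w$ and to $0$; hence at least one of the level sets $Y = \{w = \max w\}$ and $Z = \{w = 0\}$ avoids $q$ and is thus a nonempty proper subset of $V$. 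If $Y \subseteq V$, then for $z \in Y$ each neighbour $y \notin Y$ contributes at most $-1$ to $(\Delta_G w)(z)$, so $(\Delta_G w)(z) \le -(\deg_G(z) - \mathrm{indeg}_Y(z))$, and combined with $r_1(z) \le \deg_G(z)-1$ this gives $r_2(z) \le \mathrm{indeg}_Y(z) - 1 < \mathrm{indeg}_Y(z)$ for every $z \in Y$; thus $Y$ is a forbidden subconfiguration of $r_2$, contradicting its recurrence. If instead $Z \subseteq V$, the same computation with $(\Delta_G w)(z) \ge \deg_G(z) - \mathrm{indeg}_Z(z)$ on $Z$ exhibits $Z$ as a forbidden subconfiguration of $r_1$. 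Hence $w$, and therefore $v$, is constant, and since $v(q)=0$ this forces $v \equiv 0$ and $r_1 = r_2$.

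The linear-algebra normalizations (reaching a nonnegative, then a stable, representative; the extension of $v$ over the sink) are routine. The one genuinely load-bearing input is the passage from a stable representative to a recurrent one in the existence step, which rests on the convergence of Dhar's burning procedure — or, equivalently, on the group structure of the set of recurrent configurations — a standard but nontrivial fact that I would cite from [\cite{jarai2014sandpile}] rather than reprove.
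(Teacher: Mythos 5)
The paper offers no proof of this proposition --- it is quoted from J\'arai's survey --- so there is nothing internal to compare against; I judge your argument on its own terms. It is essentially correct. The uniqueness half is complete and is the standard discrete maximum principle: writing $r_2 = r_1 + \tilde{\Delta}_G v$, extending $v$ by $v(q)=0$, normalizing to $w \ge 0$, and observing that at least one of the extreme level sets of $w$ misses $q$ and then furnishes a forbidden subconfiguration of $r_2$ (the max level set) or of $r_1$ (the min level set). This is the same mechanism the paper itself deploys in its Proposition 5, where $\mathrm{outdeg}_Y = \Delta \mathbbm{1}_{(X-Y)\cup \partial X}$ converts a forbidden subconfiguration into a violation of recurrence. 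Two caveats: (i) your argument needs recurrent configurations to be stable ($r_i \le \deg - 1$); the paper's Definition 1 literally omits stability, and without it uniqueness is false (the configurations $c + ND\mathbbm{1}$ for large $N$ are all free of forbidden subconfigurations and mutually equivalent), so you are right to build stability into the definition, but this deserves an explicit remark; (ii) the level set you extract need not be a \emph{proper} subset of $V$ --- it can be all of $V$ --- though this is harmless for the forbidden-subconfiguration conclusion. The existence half is correct in outline: the reduction to a nonnegative and then stable representative via invertibility of $\tilde{\Delta}_G$ and Cramer's rule is fine, but the passage from a stable to a recurrent representative is entirely outsourced to the convergence of $s \mapsto (s+\beta)^{\circ}$ onto recurrent configurations. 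That is the genuinely nontrivial lemma (one direction of Dhar's burning analysis), and since the paper cites the whole proposition to J\'arai anyway, citing this one ingredient is a defensible division of labor --- just be aware that it, together with your level-set argument, is where all of the mathematical content actually sits.
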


The goal of this paper is to explore the recurrent representative of the identity element of the graph $E_{A,k}$, for certain matrices $A$ explored in the next section.

\subsection{Integer superharmonic matrices}
\label{intsup}
Let $\text{Sym}( \mathbb{R}_{2 \times 2})$ be the set of symmetric $2 \times 2$ matrices with real entries. Let \newline $q_A: \mathbb{Z}^2 \rightarrow \mathbb{Z}$ be defined by $q_A(x) = \frac{1}{2} x^T A x,$ where in the previous equation, $x \in \mathbb{Z}^2$ is considered as a 2-component vector. Define the set of integer superharmonic matrices, $\Gamma$, as follows:
\begin{align*}
\Gamma = \{A \in \text{Sym}_{2\times 2}(\mathbb{R})  \, | \, \exists o_A: \mathbb{Z}^2 & \rightarrow \mathbb{Z}, \, o_A (x) \geq q_A (x) + o (|x|^2), \, \\ &\Delta o_A (x) \leq 3 \, \, \forall x \in \mathbb{Z}^2\}.
\end{align*}

In words, a real, symmetric $2 \times 2$ matrix is integer superharmonic if there exists an integer-valued dominating (up to terms $o(|x|^2)$) function which satisfies $\Delta o_A (x) \leq 3$ for all $x$. Such a function will be called an 'integer superharmonic witness corresponding to A'. Note that the definition of a superharmonic function $o_A : \mathbb{Z}^2 \rightarrow \mathbb{Z}$ is usually that $\Delta o_A(x) \leq 0$ for all $x \in \mathbb{Z}^2,$ while we are using the convention that $o_A$ is required to satisfy $\Delta o_A(x) \leq 3$ for all $x \in \mathbb{Z}^2$. One can easily translate between these two conventions by simply adding a function such as $\frac{3}{2}x_1(x_1 + 1)$, which has Laplacian identically equal to 3. 

When considering a subset $X \subset \mathbb{Z}^2,$ we will speak of its outer and inner boundaries $\partial X$ and $\overline{\partial} X$, respectively, defined as

\begin{definition}
Given a subset $X \subset \mathbb{Z}^2,$ define
\[
\partial X = \{y \in \mathbb{Z}^2 - X \, | \, y \sim x \text{ for some } x \in X \}.
\]
\end{definition}

\begin{definition}
Given a subset $X \subset \mathbb{Z}^2,$ define
\[
\overline{\partial} X = \{y \in X \, | \, y \sim x \text{ for some } x \in \mathbb{Z}^2 - X \}.
\]
\end{definition}

\begin{definition}
Given a subset $X \subset \mathbb{Z}^2,$ define the complement of $X$ as
\[
X^c = \{y \in  \mathbb{Z}^2 \, | \, y \notin X \}.
\]
\end{definition}
We now define a subset $\Gamma^+ \subset \Gamma$ as follows:

\begin{definition} 
$\Gamma^+ = \{A \in \Gamma \, | \, \exists \epsilon > 0 \, \text{ s.t. }  A - \epsilon I \leq B \in \Gamma \implies B \leq A\}$.
\end{definition}

The matrices $A \in \Gamma^+$ are in some sense maximal, which we now discuss.

The set $\Gamma$ shares a relationship to an Apollonian circle packing in the plane, which we now discuss (following the authors of \cite{levine2017apollonian}). Consider the set of lines $\{x = 2k\}$ for $k \in \mathbb{Z}$, along with the set of circles $C_k$ of radius 1, centered at $(2k+1,0)$ for $k \in \mathbb{Z}.$ For any three pairwise tangent $\textit{general circles}$ (that is, circles or lines), there are exactly two $\textit{Soddy general circles}$ that are tangent to all three (we consider two lines to be tangent if and only if they are adjacent, i.e. are given by $\{x = 2k\}$ and $\{x=2k+2\}$ for some $k)$. The $\textit{Apollonian circle packing}$ generated by the lines $\cup_{k \in \mathbb{Z}} \{x = 2k\}$ and the circles $\cup_{k \in \mathbb{Z}} C_k$ is the minimal set of general circles which contains the generators and is closed under the addition of Soddy general circles for any pairwise-tangent triple in the packing (see  \hyperref[CoorpackFigure]{Figure 4}).

\begin{figure}
\label{CoorpackFigure} 
\input{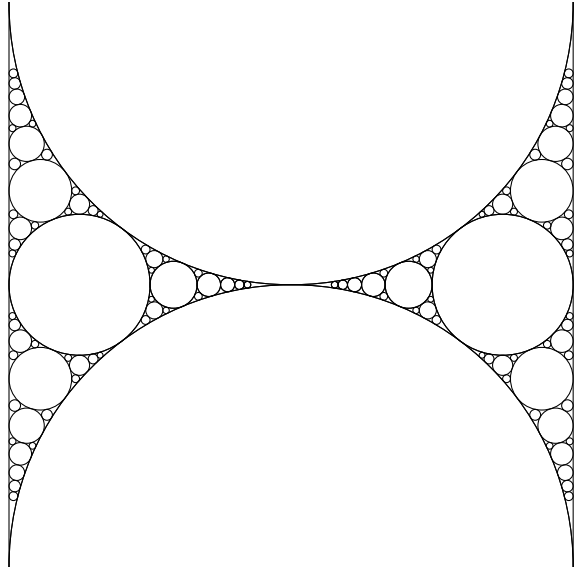}
\caption{ \cite{levine2017apollonian} A portion of the Apollonian circle packing between the lines $\{x=0\}$ and $\{x=2\}$}
\end{figure}

\begin{figure}
\label{ConeFigure}
\input{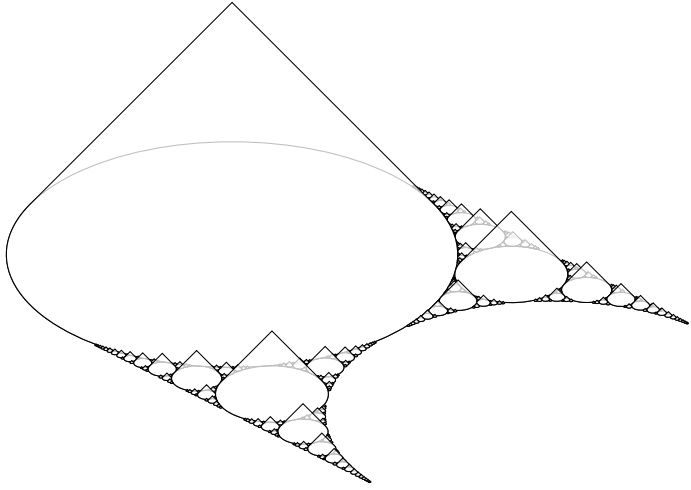}
\caption{\cite{levine2017apollonian} A portion of the boundary of the set $\Theta$}
\end{figure}

Let this Apollonian circle packing take place in the $\{z=2\}$ plane in $\mathbb{R}^3.$ Over each circle in the Apollonian circle packing, we can consider the cone protruding out of the plane (in the direction of positive-z) with slope 1, so that each circle is the base of a cone of height equal to the circle's radius. Define $\Theta \subset \mathbb{R}^3$ to be the downset of these cones, that is, the set of points $(x,y,z) \in \mathbb{R}^3$ such that $(x,y,z+\ell)$ is on a cone for some $\ell \in \mathbb{R}_{\geq 0}$ (see  \hyperref[ConeFigure]{Figure 5}). 

The authors of \cite{levine2017apollonian} prove that $\Gamma = \Theta$, under the following coordinate identification on $\Gamma$:
\[
\text{given } A = \begin{bmatrix} a & b \\ b & c \end{bmatrix}, \text{ set } \begin{cases} x = a-c \\ y = 2b \\ z = a + c \end{cases}.
\]
In other words,

\begin{proposition} \cite{levine2017apollonian}
$A \in \text{Sym}(\mathbb{R})_{2 \times 2}$ is integer superharmonic if and only if \newline $(x(A),y(A),z(A)) \in \Theta,$ with the coordinates $\big(x(A),y(A),z(A)\big)$ defined above. 
\end{proposition}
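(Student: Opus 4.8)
The plan is to prove the two inclusions $\Theta \subseteq \Gamma$ and $\Gamma \subseteq \Theta$ after a couple of soft reductions. First, $\Gamma$ is a downset for the positive-semidefinite order: if $A \in \Gamma$ has witness $o_A$ and $B \preceq A$, then $q_B \le q_A$ pointwise, so the same $o_A$ witnesses $B$; under the coordinate identification the cone $\{B \succeq 0\}$ is exactly the upward slope-$1$ cone with apex at the origin, so this says that $\Gamma$, like $\Theta$, is a downset for the order induced by that cone. Second, a compactness argument shows $\Gamma$ is closed: given $A_n \to A$ with witnesses $o_{A_n}$ normalized to vanish near the origin, the bound $\Delta o_{A_n} \le 3$ gives enough local rigidity to extract a pointwise subsequential limit $o_A$ that is again an integer-valued witness for $A$. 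Combining these, it suffices to (a) exhibit a witness for every matrix whose coordinates lie on the upper envelope of the Apollonian cones, and (b) show that no matrix whose coordinates lie strictly above that envelope has a witness.

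For (a), the constructive direction, I would induct over the recursive generation of the packing. To each generator line $\{x = 2k\}$, each generator circle $C_k$, and then to every Soddy child, attach an explicit periodic integer-valued function on $\mathbb{Z}^2$ --- a ``tile pattern'' --- with discrete Laplacian $\le 3$ everywhere, such that adding the tile to a suitable affine correction reproduces the quadratic growth $q_A$ prescribed by the apex of the cone over that circle (this is precisely the pattern $p_A$ associated to $A \in \Gamma^+$ used in Proposition 6). Points of $\partial\Theta$ on a cone face below its apex, or on a crease where two cones meet, are then handled by interpolating between the tiles of the relevant circles along a one-dimensional interface pattern, or by taking limits, where the closedness reduction does much of the work. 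The inductive step is the crux: given the three tiles attached to a mutually tangent triple of general circles, one must produce the tile for the fourth, Soddy, circle and verify its Laplacian bound. The Descartes circle theorem controls the arithmetic relating the periods, hence the combinatorial size of the fundamental domains, and the tangency points dictate how the three patterns must align to glue coherently. I expect the verification that the glued configuration has no site with Laplacian exceeding $3$ --- equivalently, that it is recurrent in the appropriate sense --- together with the check that its growth rate is exactly the claimed matrix, to be by far the hardest and most case-heavy part of the whole argument.

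For (b), the reverse inclusion, suppose $A \in \Gamma$ with $(x(A),y(A),z(A))$ strictly above the envelope; by the downset and closedness reductions we may assume this point sits just above a cone over a circle $C$, so that $\mathrm{tr}(A) = z(A)$ exceeds the trace $z(A')$ of the apex matrix $A'$ of $C$ with $(x,y)$ essentially unchanged. Since $\Delta q_A \equiv \mathrm{tr}(A)$, writing $o_A = q_A + w$ forces $\Delta w \le 3 - \mathrm{tr}(A)$ with $w = o(|x|^2)$; summing this over a box of side $L$ and applying discrete Gauss--Green bounds a boundary flux term by $(3 - \mathrm{tr}(A))L^2 + o(L^2)$, whereas the extremal tile for $C$ constructed in part (a) (which realizes growth rate $A'$ with the largest allowed Laplacian) forces the same flux to be at least $(3 - \mathrm{tr}(A'))L^2 + o(L^2)$; since $\mathrm{tr}(A) > \mathrm{tr}(A')$ these are incompatible as $L \to \infty$. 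The only subtlety is that $o_A$ need not agree with the tile anywhere, so the comparison is made through a maximum-principle/obstacle argument: $o_A \ge q_A + o(|x|^2)$ together with the too-large trace makes $q_A$ eventually dominate every tile pattern by an unbounded margin, which is impossible for a global integer function with Laplacian bounded by $3$. This direction is essentially a soft maximum-principle argument and should be considerably easier than (a).
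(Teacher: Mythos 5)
This proposition is not proved in the paper at all: it is imported wholesale from \cite{levine2017apollonian}, where it is the main theorem of a long and technical article, so there is no internal argument to compare yours against. Your outline does correctly identify the global architecture of the proof in that reference: $\Gamma$ is a downset for the positive-semidefinite order (and your observation that the cone $\{B \succeq 0\}$ becomes the upward slope-$1$ cone in the $(x,y,z)$ coordinates is right, so this matches the downset structure of $\Theta$), $\Gamma$ is closed under a suitable normalization of witnesses, and the problem reduces to realizing the apexes of the Apollonian cones and excluding everything strictly above the envelope.

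As a proof, however, the proposal has two genuine gaps. First, part (a) is a plan rather than an argument: the recursive construction of the periodic odometer attached to each circle of the packing, together with the verification that the glued configuration satisfies $\Delta \le 3$ everywhere and has exactly the prescribed quadratic growth, \emph{is} the theorem --- it occupies the bulk of \cite{levine2017apollonian} --- and you explicitly defer it. Second, the exclusion argument in part (b) does not prove what is needed. Summing $\Delta w \le 3 - \mathrm{tr}(A)$ over a box of side $L$ and bounding the boundary flux of $w = o(|x|^2)$ by $o(L^2)$ yields only $\mathrm{tr}(A) \le 3$; this classical bound holds for arbitrary (not necessarily integer-valued) witnesses and is blind to the Apollonian structure, since over a circle of radius $\rho < 1$ the envelope sits at height $2 + \rho < 3$ and your inequality cannot exclude a matrix with trace between $2+\rho$ and $3$ there. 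The patch you propose --- that the extremal tile for $C$ ``forces the same flux to be at least $(3-\mathrm{tr}(A'))L^2$'' --- is unsupported: an arbitrary witness $o_A$ has no a priori relation to that tile, and no inequality between their fluxes follows from anything you have established. In \cite{levine2017apollonian} the maximality half goes through the recurrence of the constructed odometers and a comparison/maximum-principle argument exploiting where $\Delta o = 3$ is forced to hold; that is precisely where integrality enters, and without it the upper bound cannot improve on $\mathrm{tr}(A) \le 3$.
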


The matrices $A \in \Gamma^+$ correspond exactly to the peaks of the cones in $\Theta.$ A special property of matrices in $\Gamma^+$ is that they possess an integer superharmonic witness which is recurrent, in the following sense:

\begin{definition}
\label{recurrentfunc}
A function $v: \mathbb{Z}^2 \rightarrow \mathbb{Z}$ is recurrent in $X \subset \mathbb{Z}^2$ if $\Delta v \leq 3$ in X and 
\[
\sup_Y (v-w) \leq  \sup_{(X - Y) \cup \partial X}(v-w)
\]
whenever $w: \mathbb{Z}^2 \rightarrow \mathbb{Z}$ satisfies $\Delta w \leq 3$ in a finite $Y \subset X.$
\end{definition}

More precisely, the integer superharmonic witnesses for members of $\Gamma^+$ are recurrent on all of $\mathbb{Z}^2.$ We now give a proposition relating the definition of a recurrent sandpile with that of a recurrent function.

\begin{proposition}
\label{prop5}
A function $v: \mathbb{Z}^2 \rightarrow \mathbb{Z}$ is recurrent on a finite subset $X \subset \mathbb{Z}^2$ if and only if $\Delta v|_X$ is a recurrent sandpile on X. 
\end{proposition}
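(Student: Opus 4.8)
The plan is to prove both implications by contraposition, in each case converting a witness for the failure of one notion into a witness for the failure of the other. First I would pin down the bookkeeping: the sandpile graph attached to $X$ has vertex set $X\cup\{q\}$, with $q$ the sink obtained by collapsing $\partial X$ (and everything outside $X$) to a point, so every $x\in X$ has degree $4$, the height of the configuration $\Delta v|_X$ at $x$ is literally $\Delta v(x)$, and for a nonempty $X'\subseteq X$ and $x\in X'$ the quantity $\textbf{indeg}_{X'}(x)$ is simply the number of $\mathbb Z^2$-neighbors of $x$ lying in $X'$. I would also note at the outset that a recurrent sandpile is in particular stable, so $\Delta v\le 3$ on $X$ is built into both sides of the asserted equivalence; granting that, it remains to show that $v$ satisfies the displayed inequality of Definition \ref{recurrentfunc} if and only if $\Delta v|_X$ possesses no forbidden subconfiguration.

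For the implication "$\Delta v|_X$ recurrent $\Rightarrow v$ recurrent", suppose $v$ violates Definition \ref{recurrentfunc}: there are a finite nonempty $Y\subseteq X$ and $w:\mathbb Z^2\to\mathbb Z$ with $\Delta w\le 3$ on $Y$ and $M:=\sup_Y(v-w)>\sup_{(X-Y)\cup\partial X}(v-w)$. I would take $X':=\{x\in Y:(v-w)(x)=M\}$, which is nonempty since $Y$ is finite, and show it is a forbidden subconfiguration of $s:=\Delta v|_X$. Every neighbor of a point $x\in X'$ lies in $X\cup\partial X$; the $\textbf{indeg}_{X'}(x)$ of these neighbors that lie in $X'$ contribute the value $M$ to $\Delta(v-w)(x)$, while each of the remaining $4-\textbf{indeg}_{X'}(x)$ of them has $(v-w)$-value strictly less than $M$ — whether it lies in $Y\setminus X'$ or in $(X-Y)\cup\partial X$ — hence at most $M-1$ by integrality. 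A one-line computation then gives $\Delta(v-w)(x)\le \textbf{indeg}_{X'}(x)-4$, while $\Delta(v-w)(x)=s(x)-\Delta w(x)\ge s(x)-3$ because $x\in Y$; combining these yields $s(x)<\textbf{indeg}_{X'}(x)$ for every $x\in X'$, so $s$ is not recurrent.

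For the converse, suppose $s:=\Delta v|_X$ has a forbidden subconfiguration $X'\subseteq X$, so that $s(x)<\textbf{indeg}_{X'}(x)$ for all $x\in X'$. I would exhibit $Y:=X'$ and $w:=v-\mathbf{1}_{X'}$ as a violation of Definition \ref{recurrentfunc}. Since $\Delta\mathbf{1}_{X'}(x)=\textbf{indeg}_{X'}(x)-4$ for $x\in X'$, one gets $\Delta w(x)=\Delta v(x)+4-\textbf{indeg}_{X'}(x)\le 3$ there, so $\Delta w\le 3$ on $Y$; but $v-w=\mathbf{1}_{X'}$, so $\sup_{X'}(v-w)=1>0=\sup_{(X-X')\cup\partial X}(v-w)$, the latter set being nonempty because it contains $\partial X$. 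Hence $v$ is not recurrent in $X$, completing the contrapositive.

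The only place that needs genuine care is the first implication: one must recognize that the locus where $v-w$ attains its maximum over $Y$ is forced to be a forbidden subconfiguration, and that the \emph{strict} inequality at the exterior set $(X-Y)\cup\partial X$, together with $v$ and $w$ being $\mathbb Z$-valued, is exactly what upgrades "$<M$" to "$\le M-1$" at every exterior neighbor — precisely the slack that the definition of a forbidden subconfiguration demands. The reverse implication is then just the verification that $v-\mathbf{1}_{X'}$ is the right test function, which is a short direct calculation.
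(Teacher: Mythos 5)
Your proof is correct. The direction ``forbidden subconfiguration $\Rightarrow$ $v$ not recurrent'' is essentially the paper's argument: your test function $w=v-\mathbf{1}_{X'}$ has the same Laplacian on $X'$ as the paper's $v+\mathbf{1}_{(X-X')\cup\partial X}$ (since $\Delta(-\mathbf{1}_{X'})=\mathrm{outdeg}_{X'}=\Delta\mathbf{1}_{(X-X')\cup\partial X}$ at points of $X'$), and the two sup comparisons differ only by an additive shift of $1$. The other direction is where you genuinely diverge: the paper normalizes $w$ so that $\max_Y(v-\tilde w)=0$, takes the maximal \emph{connected} subset $Z\subseteq Y$ containing a maximizer on which $\tilde w-v=0$, and splits into cases according to whether $\tilde w\backslash v\geq 1$ on all of $\partial Z$; you instead take the full argmax set $X'=\{x\in Y:(v-w)(x)=M\}$ and show directly, via the integrality upgrade from $<M$ to $\leq M-1$ at every neighbor outside $X'$, that $X'$ is itself a forbidden subconfiguration. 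Your version buys a shorter, connectedness-free argument in which the forbidden set is produced in one step rather than extracted from a component of a level set, at the cost of arguing by contraposition rather than directly; both are valid. One shared caveat: Definition 1 in the paper does not literally require stability of a recurrent sandpile, so ``no forbidden subconfiguration'' does not by itself yield the clause $\Delta v\leq 3$ on $X$ demanded by Definition \ref{recurrentfunc}; the paper silently ignores this, while you at least flag that stability is part of the intended convention, which is the right way to handle it.
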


\begin{proof}
$\textbf{v recurrent function} \implies \Delta \textbf{v recurrent sandpile}$

Assume that $\Delta v$ has an FSC $Y \subset X,$ so that for all $x$ in $Y,$ $\Delta v (x) < \text{indeg}_Y(x)$. Equivalently, $\Delta v (x) + \text{outdeg}_Y(x)  < 4$, where $\text{outdeg}_Y(x) = 4-\text{indeg}_Y(x).$ Since $\text{outdeg}_Y (x) = \Delta \mathbbm{1}_{(X - Y) \cup \partial X}(x),$ we have that $w:= v+\mathbbm{1}_{(X - Y) \cup \partial X}$ satisfies $\Delta w \leq 3$ on $Y$ and 
\[
\sup_{Y} (v - w) = 0 > -1 = \sup_{(X - Y) \cup \partial X} (v-w),
\]
violating the recurrency of $v,$ and yielding a contradiction.

$\Delta \textbf{v recurrent sandpile} \implies \textbf{v recurrent function}$
Consider a function $w: X \cup \partial X \rightarrow \mathbb{Z}$ satisfying $\Delta w \leq 3$ in some finite $Y \subset X.$ Define $y_0$ to be a point at which $v-w$ is maximized in Y. Consider $\tilde{w}(x) = w(x) - w(y_0) + v(y_0)$, so that $(v-\tilde{w})(y_0) = 0,$ $\sup_Y (v - \tilde{w}) = 0,$ and $\Delta{\tilde{w}} = \Delta w.$ Define the set $Z \subset Y$ to be the maximal connected set containing $y_0$ on which $\tilde{w} - v =  0.$ Define $\tilde{w} \backslash v(x) := \max \big((\tilde{w}-v)(x),0\big)$. Clearly $\tilde{w} \backslash v$ is identically 0 on Z. 

Now, if $(\tilde{w} \backslash v) (x) \geq 1$ for all $x \in \partial Z,$ then we would have that 
\[
3 \geq \Delta \tilde{w} (z) \geq \Delta v (z) + \text{outdeg}_Z (z) \qquad \qquad \forall z \in Z,
\]
where the first inequality follows from our assumptions on $w,$ and the second inequality follows from the fact that $\Delta (w-v)(z) \geq \text{outdeg}(z),$ since $(\tilde{w} \backslash v) (x) \geq 1$ for all $x \in \partial Z.$ But this says exactly that $Z$ is a forbidden subconfiguration of X for $\Delta v$, a contradiction our assumption that $\Delta v$ is a recurrent sandpile. Thus, there must be some $x \in \partial Z$ such that $(\tilde{w} \backslash v) (x) = 0.$ By the definition of $Z,$ $x \notin Y,$ and thus $\sup_{(X - Y) \cup \partial X}(v - \tilde{w}) \geq 0 = \sup_Y (v - \tilde{w}),$ giving that $v$ is a recurrent function on $X.$

\end{proof}

In \cite{levine2017apollonian}, the authors present an explicit recurrent integer superharmonic witness for all $A \in \Gamma^+.$ A particularly interesting feature of such a witness is that the corresponding Laplacian is doubly periodic, and this period gives a hexagonal tiling of $\mathbb{Z}^2$, in the following way.

\begin{proposition} \cite{levine2017apollonian}
\label{prop6}
For every matrix $A \in \Gamma^+,$ there is a function $o_A : \mathbb{Z}^2 \rightarrow \mathbb{Z}$, a matrix $V \in \mathbb{Z}^{2 \times 3}$, and a subset $T \subset \mathbb{Z}^2$ such that the following hold:

1) $o_A$ is recurrent and can be decomposed as $o_A(x) = q_A(x) + L(x) + \rho_A(x) + c,$ where $q_A(x) = \frac{1}{2}x^T A x,$ $L(x) = b \cdot x$ for some $b \in \mathbb{R}^2,$ $c \in \mathbb{R},$ and $\rho_A: \mathbb{Z}^2 \rightarrow \mathbb{Z}$ is a $V \mathbb{Z}^{3}$ periodic function

2) If $x \sim y \in \mathbb{Z}^2$, then there is $z \in \mathbb{Z}^3$ such that $x,y \in T + V z.$

3) Let $z,w \in \mathbb{Z}^3.$ $(T + Vz) \cap (T + V w) \neq \emptyset $ if and only if $|z-w|_1 \leq 1.$ Furthermore,  $(T + Vz) \cap (T + V w) \subset  (\overline{\partial}T + Vz) \cap (\overline{\partial}T + V w)$ for any $z \neq w$.

4) $\cup_{z \in \mathbb{Z}^3} (\overline{\partial}T + Vz) \subset \{\Delta o_A = 3\}$

5) $1 \leq  |V|^2 \leq C \det (V)$, where $|V|$ is the $\ell_2$ operator norm of the matrix, $\det(V)$ is the determinant of the $2\times 2$ matrix formed by omitting the third column of $V$, and C is a universal constant.

6) $V 
\begin{bmatrix}
1 \\
1 \\
1\\
\end{bmatrix}
 = \begin{bmatrix}
0 \\
0 \\
\end{bmatrix}$
\end{proposition}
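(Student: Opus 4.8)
The plan is to start from the explicit recurrent integer superharmonic witness $o_A$ that \cite{levine2017apollonian} produces for each $A \in \Gamma^+$ — already known there to have doubly periodic Laplacian — and to repackage that construction into the stated form. Write $\Lambda \subseteq \mathbb{Z}^2$ for the finite-index period lattice of $\Delta o_A$, and recall that the construction realizes $o_A$ via a tiling of $\mathbb{Z}^2$ by congruent copies of a single polygonal tile, with $\Delta o_A = 3$ exactly along the ``walls'' between tiles and $\Delta o_A \le 3$ (in fact $\le 0$ at most interior sites) elsewhere.

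First I would establish the decomposition (1). Since $q_A(x) = \tfrac12 x^T A x$ has constant Laplacian $\Delta q_A \equiv \operatorname{tr}(A)$, the function $f := o_A - q_A$ satisfies $\Delta f = \Delta o_A - \operatorname{tr}(A)$, which is $\Lambda$-periodic. The crucial point — and where the hypothesis $A \in \Gamma^+$ (a peak of a cone of $\Theta$, not merely a point of $\Theta$) really enters — is the compatibility identity that the mean of $\Delta o_A$ over a fundamental domain of $\Lambda$ equals $\operatorname{tr}(A)$; I would obtain this either by quoting the corresponding curvature computation from \cite{levine2017apollonian} or by a summation-by-parts argument over large boxes using $o_A = q_A + o(|x|^2)$ and $\Delta o_A \le 3$. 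Granted mean-zero, the discrete Poisson equation $\Delta p = \Delta f$ has a $\Lambda$-periodic solution $p$ on the torus $\mathbb{Z}^2/\Lambda$; the arithmetic structure of the explicit witness lets one take $p$ integer-valued after absorbing fractional parts into the affine term (routine bookkeeping). Then $f - p$ is discrete harmonic on all of $\mathbb{Z}^2$ with growth $o(|x|^2)$, hence affine by the discrete Liouville theorem for polynomial-growth harmonic functions, so $f - p = b\cdot x + c$. This gives $o_A = q_A + L + p + c$ with $L(x) = b\cdot x$. To present $\Lambda$ in the stated form, pick a basis $v_1, v_2$ of $\Lambda$ and let $V \in \mathbb{Z}^{2\times 3}$ have columns $v_1, v_2, -v_1-v_2$; then $V\mathbb{Z}^3 = \Lambda$ and $V(1,1,1)^T = 0$, which is (6) and supplies the periodicity in (1).

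Next I would read off (2)--(5) from the tiling. Let $T$ be the closed tile of the \cite{levine2017apollonian} tiling containing the origin, including its boundary lattice points; it is a fundamental domain for $\Lambda$, and ``tiles $=$ $\Lambda$-translates of $T$'' is built into the construction. Since every edge of $\mathbb{Z}^2$ lies inside a single closed tile, (2) follows. The combinatorics are those of a triangular tiling whose dual is hexagonal — three tiles meet at each corner — which is precisely why the translates are naturally indexed by $\mathbb{Z}^3$ modulo $(1,1,1)$, with $T+Vz$ and $T+Vw$ intersecting iff $|z-w|_1 \le 1$ and then only along inner boundaries; this gives (3), while (4), namely $\overline{\partial}T \subseteq \{\Delta o_A = 3\}$, is exactly the statement that the walls are where the Laplacian saturates. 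For (5), the roundness estimate $1 \le |V|^2 \le C\det V$ (reading $\det V$ as the covolume of $\Lambda$), I would invoke the uniform geometric bounds of \cite{levine2017apollonian} showing the tile's diameter and inradius are comparable, so that $|V| \asymp \operatorname{diam}(T)$ and $\det V \asymp \#(T\cap\mathbb{Z}^2) \asymp \operatorname{diam}(T)^2$ with a universal constant; the lower bound $|V|^2 \ge 1$ is immediate since $V$ has a nonzero integer column.

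I expect the main obstacle to be twofold. Conceptually, the mean-value identity together with the Liouville step is the only part of the argument that is not a mechanical unpacking of \cite{levine2017apollonian}, and it is exactly where $A \in \Gamma^+$ is used; getting it cleanly (and handling the integrality of $p$) is the delicate point. Technically, matching the concrete polygonal construction of \cite{levine2017apollonian} to the axiomatized tiling statements (2)--(5) — in particular extracting the uniform roundness bound (5) and the ``$\mathbb{Z}^3$ modulo $(1,1,1)$'' indexing from what is, in the reference, a fairly intricate case analysis — will be the bulk of the work.
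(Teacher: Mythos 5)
The paper does not actually prove this proposition: it is quoted from [\cite{pegden2020stability}] (which in turn rests on the explicit odometer construction of [\cite{levine2017apollonian}]) and is used as a black box, so there is no in-paper argument to compare yours against. Judged on its own terms, your sketch is a plausible reconstruction of where the statement comes from, and your route to part (1) --- periodic Laplacian, a mean-value compatibility condition, a periodic solution of the discrete Poisson equation on the torus, and the discrete Liouville theorem to identify the affine remainder --- is a legitimate alternative to simply reading the decomposition off the explicit formula for the witness, which is how the cited works obtain it.

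Two points in your sketch are genuinely incomplete. First, in constructing $V$ you ``pick a basis $v_1, v_2$ of $\Lambda$'' and append $-v_1-v_2$; but property (3) is not basis-independent. It asserts that the six tiles meeting $T$ are exactly its translates by $\pm v_1$, $\pm v_2$, $\pm(v_1+v_2)$, so the basis must be chosen adapted to the adjacency structure of the tiling: for an arbitrary basis of the period lattice the columns of $V$ need not correspond to neighboring tiles, and $|z-w|_1 \le 1$ would then fail to characterize intersection. Second, your derivation of (1) uses two-sided control $o_A - q_A = o(|x|^2)$ and integrality of the periodic part; neither follows from the definition of $\Gamma$ (which is one-sided), so both must be imported from the explicit witness --- at which point the Poisson--Liouville detour is doing less work than it appears, since that witness already comes with the decomposition built in. Neither issue is fatal, but both sit exactly where your ``routine bookkeeping'' hides the content, and a complete proof would in any case have to reproduce the case analysis of [\cite{levine2017apollonian}] rather than treat it as a uniform black box.
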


The above proposition ensures that the tile T, along with its translation $T + V z$ for $z \in \mathbb{Z}^3$, cover all of $\mathbb{Z}^2$, with overlap on the inner boundaries of adjacent tiles. The size of a tile is given by $(Tr(A)-2)^{-1}$ \cite{levine2017apollonian}. It also gives that $\Delta o_A = \text{Tr} (D^2 o_A)$ is periodic: for any $z \in \mathbb{Z}^3, \Delta o_A (x) = \Delta o_A (x + Vz).$ It also says that $\Delta o_A (x) = 3$ on the inner boundaries of all of the tiles. By fixing z in 3) and considering every w with $|z-w|_1 = 1$, we see that every tile in $\mathbb{Z}^2$ has exactly 6 neighboring tiles. That $\det (V) \neq 0$ implies that $\dim \Big(\text{Span}_{\mathbb{R}} \{ V_i\} \Big) = 2,$ where $\{V_i\}_{1 \leq i \leq 3}$ are the columns of V. Thus we can see that the vectors describing the periodicity of $\Delta o_A$ can be obtained by selecting any two columns of V. 

\begin{figure}
\label{BadEllipseFigure}
 \centering
  \subcaptionbox{$E_{A_1,k}$}{\includegraphics[width=2in]{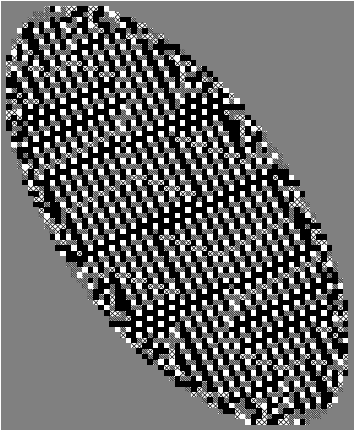}}\hspace{1em}%
  \subcaptionbox{$E_{A_2,k}$}{\includegraphics[width=2in]{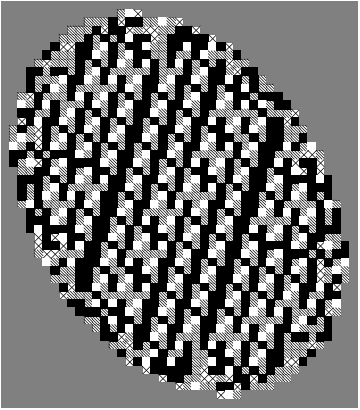}}\hspace{1em}

\caption{Identity elements of $E_{A_i,k}$, $A_i \notin \Gamma^+,$ $k = 18^2$ $A_1 \in \Gamma \backslash \partial \Gamma,$ $A_2 \notin \Gamma$.}
\end{figure}

The key property of matrices $A \in \Gamma^+$ which this paper explores is that the identity element of $E_{A,k}$ will predominantly feature the biperiodic pattern $p_A = \Delta o_A$. In this sense, the identity elements corresponding to these matrices feature a high level of order; matrices not belonging to $\Gamma^+$ will generally produce more chaotic identity elements with no evident patterns. \hyperref[BadEllipseFigure]{Figure 6} features the identity elements corresponding to various matrices which do not belong in $\Gamma^+$.  $k = 18^2$ for all identity elements in Figure 6, and the matrices are as follows:

\begin{align*}
A_1 &=  \begin{bmatrix} 1 & \frac{49}{100} \\  \frac{49}{100} & \frac{2}{3}\end{bmatrix} \in \Gamma \backslash \partial \Gamma \\
A_2 &=  \begin{bmatrix} \frac{7}{4} & \frac{45}{99} \\  \frac{45}{99} & \frac{4}{3}\end{bmatrix} \notin \Gamma,
\end{align*}
where $\partial \Gamma$ denotes the set of matrices represented by the Euclidean toplogical boundary of $\Theta \subset \mathbb{R}^3.$

\label{BVP}
\section{The Sandpile Boundary Value Problem}

Given a matrix $A \in \Gamma^+$ with $\det(A) > 0$ recall the following graph:
\[
E_{A,k} = \{ x \in \mathbb{Z}^2 : \frac{1}{2} xAx < k \}
\]
Next, form the graph $E_{A,k}'$ by considering the outer boundary $\partial E_{A,k}$ of the above graph, and identifying all vertices of $\partial E_{A,k}$ as one sink vertex. The sink vertex has edges connecting to $E_{A,k}$ according to the adjacencies of the vertices of the outer boundary $\partial E_{A,k}$ with the inner boundary $\overline{\partial} E_{A,k}$ (counting multiplicities).

The goal of the remainder of this paper is to characterize the recurrent representative of the sandpile identity element of $E_{A,k}'$ in the limit of large $k$. We first seek to find a suitable definition of an odometer function for the recurrent identity element on $E'_{A,k},$ a graph with sink.  We motivate our construction by considering the following discrete boundary value problem:

Find the pointwise-minimal function $v : \mathbb{Z}^2 \rightarrow \mathbb{Z}$ satisfying 
\begin{equation}
\begin{cases}
\Delta v (x) \leq 3 & x \in E_{A,k} \\
v(x) \geq 0 & x \in E_{A,k}^c.
\end{cases}
\end{equation}
Where pointwise-minimality is interpreted in the following way: 
Let 
\[
\label{odoms}
\mathcal{O} := \{w : \mathbb{Z}^2 \rightarrow \mathbb{Z} \quad | \quad w \text{ satisfies (1)}\}.
\]
Then $v(x) = \inf_{w \in \mathcal{O}} (w(x)).$ We make use of the following proposition:

\begin{proposition}
\label{prop7}
Let $u,w: \mathbb{Z}^2 \rightarrow \mathbb{Z}.$ If $\Delta u (x) \leq 3$ and $\Delta w (x) \leq 3$ for all $x \in \mathbb{Z}^2,$ then $v(x) := \min (u(x),w(x))$ also satisfies $\Delta v (x) \leq 3$ for all $x \in \mathbb{Z}^2.$
\end{proposition}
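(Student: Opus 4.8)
The plan is a direct computation from the definition of the discrete Laplacian, exploiting the single observation that the "center" coefficient in $\Delta$ is negative while the "neighbor" coefficients are positive. Fix an arbitrary $x \in \mathbb{Z}^2$. Since $v(x) = \min(u(x), w(x))$, one of the two functions attains the minimum at $x$; without loss of generality say $u(x) \le w(x)$, so that $v(x) = u(x)$ exactly. (The remaining case is identical with the roles of $u$ and $w$ exchanged.)

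Now evaluate $\Delta v$ at $x$. At the point $x$ itself, $v(x) = u(x)$, while at each neighbor $y \sim x$ we only know that $v(y) = \min(u(y), w(y)) \le u(y)$; since the neighbor terms enter with positive coefficient, shrinking them can only decrease $\Delta v(x)$. Concretely,
\[
\Delta v(x) = -4 v(x) + \sum_{y \sim x} v(y) = -4 u(x) + \sum_{y \sim x} \min(u(y), w(y)) \le -4 u(x) + \sum_{y \sim x} u(y) = \Delta u(x) \le 3.
\]
Since $x \in \mathbb{Z}^2$ was arbitrary, this establishes the proposition.

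There is essentially no obstacle here: the only point requiring (trivial) care is that the case distinction must be made \emph{at the point $x$} before bounding, since $v$ need not coincide with either $u$ or $w$ at the neighbors of $x$ — it is crucial to pin $v(x)$ down exactly as $u(x)$ (or $w(x)$) and only afterwards bound the neighbor sum. The identical argument shows, more generally, that the pointwise minimum of any finite collection of functions with $\Delta \le 3$ again satisfies $\Delta \le 3$; this is the form in which the result is used to legitimize the pointwise-minimal solution of the boundary value problem (1).
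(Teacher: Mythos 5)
Your proof is correct and is essentially identical to the paper's: both fix $x$, assume without loss of generality that $v(x) = u(x)$, and then bound the neighbor sum using $v(y) \leq u(y)$ to conclude $\Delta v(x) \leq \Delta u(x) \leq 3$. No further comment is needed.
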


\begin{proof}
Let $x \in \mathbb{Z}^2$ be arbitrary. WLOG say that $v(x) = u(x).$ We then have
\[
\Delta v (x) = -4 v(x) + \sum_{y \sim x}v(y)  = -4 u(x) + \sum_{y \sim x}v(y) \leq -4 u(x) + \sum_{y \sim x}u(y), 
\]
since $v(y) \leq u(y)$ for all $y.$ The rightmost expression above is equal to $\Delta u (x),$ which is less than or equal to 3. 

\end{proof}

We now show that the solution, $v$, to this BVP is a recurrent function on $E_{A,k},$ giving that $\Delta v$ is a recurrent sandpile on $E'_{A,k}.$ 

\begin{proposition} The solution to the BVP (1) is a recurrent function on $E_{A,k}.$
\label{prop8}
\end{proposition}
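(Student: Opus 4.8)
The plan is to verify the two requirements in Definition~\ref{recurrentfunc} for $v$ on $X:=E_{A,k}$: that $\Delta v\le 3$ on $X$, and that $\sup_Y(v-w)\le\sup_{(X-Y)\cup\partial X}(v-w)$ whenever $w:\mathbb{Z}^2\to\mathbb{Z}$ has $\Delta w\le 3$ on a finite $Y\subset X$.

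First I would check that the infimum $v$ is itself an element of $\mathcal{S}$. The family $\mathcal S$ is nonempty (the zero function qualifies), so $v\le 0$ everywhere; and $v$ is bounded below on $X$ by a discrete maximum principle, since if $h$ solves $\Delta h=3$ on $X$ with $h\equiv 0$ on $\partial X$, then every $w\in\mathcal S$ has $\Delta(w-h)\le 0$ on $X$ and hence $w\ge h$ on $X$, so $v\ge h$ on $X$ and $v\ge 0$ on $X^c$. Thus $v$ is a well-defined integer-valued function. To obtain $\Delta v(x)\le 3$ at a point $x\in X$, choose for each of the five points in the star of $x$ a competitor in $\mathcal S$ realizing $v$ there, let $w^\star$ be the pointwise minimum of these finitely many competitors (still in $\mathcal S$ by Proposition~\ref{prop7}), and observe that $w^\star=v$ on the star of $x$, whence $\Delta v(x)=\Delta w^\star(x)\le 3$. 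Since $v\ge 0$ on $X^c$ is immediate, $v\in\mathcal S$ and the first requirement holds.

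For the second requirement I would fix such a $w$, set $m:=\sup_{(X-Y)\cup\partial X}(v-w)$ (finite, as $X$ is finite), and argue by contradiction assuming $\sup_Y(v-w)>m$. The idea is to build a competitor strictly below $v$ somewhere, contradicting pointwise minimality; the candidate is
\[
\hat w(x)=\begin{cases}\min\bigl(v(x),\,w(x)+m\bigr)& x\in X,\\ v(x)& x\notin X.\end{cases}
\]
The mechanism is that the modification set $A:=\{x:v(x)>w(x)+m\}$ is disjoint from $(X-Y)\cup\partial X$ by the definition of $m$, so $A\cap X\subseteq Y$; and since every point of $X^c$ adjacent to $X$ lies in $\partial X$ and hence outside $A$, the function $\hat w$ coincides on the whole star of every $x\in X$ with the globally-defined $\phi:=\min(v,w+m)$. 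Consequently, at $x\in Y$ we get $\Delta\hat w(x)=\Delta\phi(x)\le 3$ from Proposition~\ref{prop7} (using $\Delta v\le 3$ on $X$ and $\Delta w\le 3$ on $Y$), while at $x\in X-Y$ we have $\hat w(x)=v(x)$ together with $\hat w\le v$ globally, so $\Delta\hat w(x)\le\Delta v(x)\le 3$. As $\hat w=v\ge 0$ off $X$, we conclude $\hat w\in\mathcal S$. But $A\cap Y\ne\emptyset$ by hypothesis, and there $\hat w=w+m<v$, so $v\le\hat w<v$ at such a point, a contradiction. Hence $\sup_Y(v-w)\le m$, which is exactly the recurrence inequality.

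The hard part will be the boundary bookkeeping in the last step: one must ensure that passing from $v$ to $\hat w$ leaves $v$ untouched on $X^c\setminus\partial X$, where $w$ is completely unconstrained and the condition $v\ge 0$ could otherwise be destroyed, and that on the star of every point of $X$ the function $\hat w$ may legitimately be treated as $\min(v,w+m)$ so that Proposition~\ref{prop7} applies cleanly. The remaining pieces — nonemptiness of $\mathcal S$, the lower bound on $v$, and the localization giving $v\in\mathcal S$ — are routine once the discrete maximum principle is in hand.
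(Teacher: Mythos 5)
Your argument is correct and follows essentially the same route as the paper's proof: shift $w$ by a constant so that it dominates $v$ on the appropriate boundary set, take the pointwise minimum with $v$, and contradict the pointwise minimality of $v$ via Proposition \ref{prop7}. The only substantive addition is your careful verification that the infimum $v$ is itself attained, integer-valued, and belongs to $\mathcal{S}$, a point the paper takes for granted.
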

\begin{proof}
Given $Y \subset E_{A,k}$ and a function $w$ satisfying $\Delta w \leq 3$ on $Y$, define the integer $w_0 := \sup_{\partial Y} (v - w).$ Then define the function $\tilde{w} = w + w_0,$ so that $\sup_{E_{A,k} \backslash Y \cup \partial E_{A,k}} (v-\tilde{w}) \geq 0.$ Note that nonnegativity of the $\sup$ follows from the definitions of $w_0$ and $\tilde{w}$, and the fact that $\partial Y \subset E_{A,k} \backslash Y \cup \partial E_{A,k}.$

Now, we would like to show that $\sup_{Y}(v - \tilde{w}) \leq 0.$ If it weren't, that is, if there were a $y_0 \in Y$ such that $\tilde{w}(y_0) < v(y_0),$ then the function 
\[
f(x) := \begin{cases}
\min(\tilde{w}(x),v(x)) & x \in Y \\
v(x) & x \in Y^c
\end{cases}
\]
would satisfy the BVP, contradicting that $v$ is the pointwise-least solution (since $f$ is strictly less than $v$). That $f$ satisfies the BVP can be seen by noting that, on $E_{A,k}^c \subset Y^c,$ $f(x) = v(x) \geq 0.$ Further, by \hyperref[prop7]{Proposition 7}, we have that $\Delta f \leq 3$ on $E_{A,k},$ since on $Y \cup \partial Y,$ $f(x) = \min(v(x), \tilde{w}(x))$ (note that $\tilde{w} \geq v$ on $\partial Y$). This gives that $\Delta f (x) \leq 3$ for all $x \in Y.$ Next, for $x \in Y^c,$ we have $\Delta f (x) \leq \Delta v (x),$ since $f = v$ at all $x \in Y^c$, and we have $f \leq v$ for points in $Y$ (which may neighbor points $x \in Y^c$). 
\end{proof}

Thus, by \hyperref[prop5]{Proposition 5}, $(\Delta v)|_{E_{A,k}}$ is a recurrent sandpile on $E_{A,k}.$ We now would like to show that the sandpile $(\Delta v)|_{E_{A,k}}$ belongs in the same equivalence class as the all-zeroes configuration (thus proving that it is the recurrent representative of the identity element). It suffices to find a function $w: E_{A,k} \rightarrow \mathbb{Z}$ such that $\Delta v + \tilde{\Delta}_G w = 0$ on $E_{A,k}.$ $-v$ is exactly this function.  Note that $v|_{\partial E_{A,k}}\equiv 0,$ since if there were a vertex $y \in \partial E_{A,k}$ such that $v(y) > 0,$ then we would have $(1-\mathbbm{1}_{x = y}) v \in \hyperref[odoms]{\mathcal{O}},$ since $v(x) \geq 0$ for all $x \in \partial E_{A,k}$ the condition $\Delta v(x) \leq 3$ for all $x \in E_{A,k}$ is preserved. This contradicts the fact that $v$ is the minimal function satisfying (1). Thus $v|_{\partial E_{A,k}} \equiv 0,$ which gives that $\tilde{\Delta}_G v = (\Delta v)|_{E_{A,k}}$. This gives
\[
\Delta v+ \tilde{\Delta}_G (-v) = \Delta v + \Delta (-v) = 0
\]
on $E_{A,k},$ by linearity of the Laplacian. Thus $\Delta v$ is the recurrent representative of the identity element. 

In an effort to make the difference between the operators $\Delta$ and $\tilde{\Delta}_G$ clear, we note that the former operator may in general include topplings of the boundary $\Delta E_{A,k},$ while the latter operator does not represent toppling of the outer boundary; only topplings of the interior. The two operators are equivalent when $v|_{\partial E_{A,k}} \equiv 0.$

In what follows, we refer to $v$ as the odometer function.

\section{Main results}

Fix a matrix $A \in \Gamma^+$ such that $\det (A) > 0$ (so that the level set of $\frac{1}{2} x^T A x$ is indeed an ellipse). Let $\lambda_1 < \lambda_2$ be the eigenvalues of the matrix A, with corresponding (unit-length) eigenvectors $v_1$, $v_2$. 

In the following, we only consider matrices lying in the fundamental domain of the Apollonian circle packing corresponding to $(x,y) \in [0,2]^2$ (see the discussion around Proposition 4). Matrices lying in this fundamental domain which also have positive determinant can be shown to satisfy $0 < \lambda_1 \leq 1 < \lambda_2$. A complete description of these matrices can be found in \cite{levine2016apollonian}. We use below that these matrices satisfy $\text{Tr} (A) >2,$ which follows immediately from the construction relating $\Gamma$ and $\Theta$ in the discussion before Proposition 4.

Since the matrix A is symmetric, $v_1$ and $v_2$ are orthogonal. Choose $v_1$ so that the angle $\theta$ that it makes with the x-axis is $\theta \in (-\pi,\pi],$ and choose $v_2$ so that $v_1 \times v_2$ points out of the page. Let $r_1$ and $r_2$ represent the semi-major and semi-minor axes of the ellipse $E_{A,k}$, respectively. They are given by $r_i = \sqrt{\frac{2k}{\lambda_i}}.$

Let $o_A$ represent a recurrent integer superharmonic witness for A, translated so that $o_A(0) = 0$. Let $v_{A,k}(x)$ be the solution to the boundary value problem in section 3, translated so that $v_{A,k}(x)|_{\partial E_{A,k}} \equiv k.$ In what follows, the subscripts A and k will sometimes be omitted to make the notation less cumbersome.

Experiments reveal that, except for some points near the boundary of $E_{A,k}$ and some 1-dimensional noise on the interior of the graph, the sandpile identity element of $E_{A,k}$ (i.e., the recurrent representative) almost perfectly matches the pattern given by $p_A = \Delta o_A = Tr A + \Delta rho_A$ (see \hyperref[prop6]{Proposition 6}). As k tends to infinity, the sandpile matches the pattern more and more closely; the noise takes up proportionally less area. The goal of this paper (in particular, of \hyperref[theorem:mainresult]{Theorem 4.3} below) is to quantify the convergence of the pattern given by $\Delta v_{A,k}$ to the pattern given by $\Delta o_A$ in the limit of $k \rightarrow \infty.$ 

We continue with notation before we state our main result.

\begin{definition}
\label{setdefinition}

Define the sets

\begin{align*}
&\tilde{E}_{A,k} :=  \{ x \in \mathbb{R}^2 : \frac{1}{2} x^TAx < k \} \subset \mathbb{R}^2 \\
&\tilde{E}_{A,k} \subset \tilde{F}_{L,A,k} := \{x \in \mathbb{R}^2: d(x, \tilde{E}_{A,k}) \leq L \} \subset \mathbb{R}^2, \\
& G_{L,A,k} := \{x \in E_{A,k}: d(x,\partial \tilde{E}_{A,k}) \geq L \} \subset E_{A,k} \subset \mathbb{Z}^2, \\
& \tilde{G}_{L,A,k} := \{x \in \tilde{E}_{A,k}: d(x,\partial \tilde{E}_{A,k}) \geq L \} \subset \tilde{E}_{A,k} \subset \mathbb{R}^2,
\end{align*}
\end{definition} 
where $L \in \mathbb{R}^+$, and $d(\cdot, \cdot)$ is the Euclidean distance. When referring to the set $\tilde{G}_{L,A,k}$ we often drop the subscripts $A$ and $k$ when what is meant is clear from the context.

Note that $E_{A,k} = \tilde{E}_{A,k} \cap \mathbb{Z}^2$ and $G_{L,A,k} = \tilde{G}_{L,A,k} \cap \mathbb{Z}^2.$ We will use the convention that if the name of a set has a tilde $\sim$ over it, then it is a subset of $\mathbb{R}^2,$ and if it doesn't, then it is a subset of $\mathbb{Z}^2.$

Consider an open ball $B_r(x)$ (in the Euclidean metric) around every point $x \in E_{A,k}.$ If on $B_r(x) \cap E_{A,k},$ the sandpile matches the $\Delta o_A$ pattern perfectly, we call this point r-good (see \hyperref[RGoodnessFigure]{Figure 7}).

\begin{definition}
A point $x_0 \in E_{A,k}$ is r-good if there exist $y, z \in \mathbb{Z}^2$ and $w \in \mathbb{Z}$ such that $v(x) = o_A(x+y) + z \cdot x + w$ for all $x \in B_r(x_0) \cap E_{A,k}$
\end{definition}

\begin{figure}
\label{RGoodnessFigure}
 \centering
  \subcaptionbox{The identity element of an ellipse}{\includegraphics[width=2in]{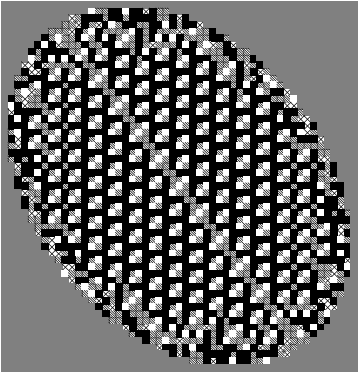}}\hspace{.5em}%
  \subcaptionbox{$x_1$ (white point) and $x_2$ (light gray point), with balls of radius $r$ centered at each}{\includegraphics[width=2in]{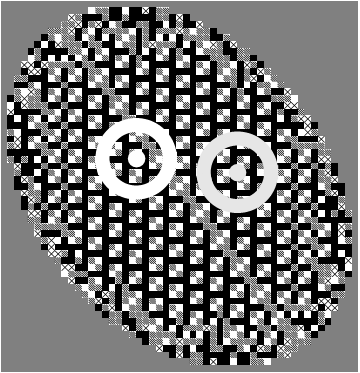}}\hspace{.5em}%
  \subcaptionbox{Enlarged portion of figure b)}{\includegraphics[width=3in]{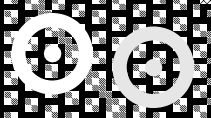}}\hspace{.5em}%
 
\caption{A demonstration of r-goodness. The white point $x_1$ is not r-good, while the light gray point $x_2$ is $r$-good.}
\end{figure}

In particular, by taking the Laplacian of both sides of the equation in the above definition, we see that if a point is r-good, then the sandpile pattern given by $\Delta v$ exactly matches some translation of the pattern $\Delta o_A$ in $B_r (x).$

\begin{theorem}
Take $r(k) = o(k^{1/4})$ and $r(k) \geq 3 |V|^3$ for all $k \in \mathbb{R}^+$. Let $f(k,A)$ be the fraction of points in $G_{r(k),A,k}$ which are $\textbf{not}$ r-good. Then

\[
\limsup_{k \rightarrow \infty} f(k,A) \cdot\frac{k^{1/4}}{r(k)} \leq C \cdot g(A),
\] 

where C is a universal constant, and $g(A)$ is a constant depending only on the matrix A, given by

\[
g(A) = 
\begin{cases} 

     \sqrt{ \lambda_1 + \lambda_2}( \sqrt{\lambda_1} + \sqrt{\lambda_2} ) \big( \sqrt{\frac{1}{\lambda_1 \lambda_2 (1+2\lambda_1 \lambda_2)}} + 2\sqrt{\frac{\lambda_1 \lambda_2}{1+2\lambda_1 \lambda_2}} \big) & \lambda_1 < \frac{1}{\sqrt{2}} \\
      
    ( \sqrt{\lambda_1} + \sqrt{\lambda_2}) \big(\frac{1}{\sqrt{\lambda_2}}+ \sqrt{2\lambda_2} \big) & \lambda_1\geq \frac{1}{\sqrt{2}} 

   \end{cases}.
\]
\end{theorem}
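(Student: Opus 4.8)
The plan is to reduce the statement to a counting estimate for the \emph{defect set} --- the locus in $E_{A,k}$ along which the biperiodic pattern $\Delta o_A$ in the identity element $\Delta v$ is broken --- and then to bound the total length of that set by $O\big(g(A)\,k^{3/4}\big)$. The principal external tool is the pattern--stability machinery of [\cite{pegden2020stability}]: once an odometer matches the pattern $\Delta o_A$ on a ball of radius a sufficiently large multiple of $|V|$, the pattern persists on concentric balls away from lower-dimensional defects, and a point fails to be $r$-good only if it lies within distance $Cr$ of such a defect. Granting this, the non-$r$-good points of $G_{r(k),A,k}$ are contained in the $Cr(k)$-neighbourhood of the defect set, which has cardinality $O\big(r(k)\cdot g(A)\,k^{3/4}\big)$; dividing by $|G_{r(k),A,k}|$, which is of order $k$ with an $A$-dependent constant, and by $r(k)$ yields $f(k,A)\le C'g(A)\,r(k)\,k^{-1/4}$, i.e.\ the theorem, the remaining $A$-dependence being absorbed into $g(A)$.

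\textbf{Step 1: sandwiching $v$ between translates of $o_A$.} Since $p$ is $V\mathbb Z^3$-periodic and $V$ annihilates $(1,1,1)^T$ (Proposition~\ref{prop6}), translating $o_A$ by $Vz$ changes it only by an affine function; so the admissible local patterns are exactly $\{o_A+\ell+c:\ell\text{ affine},\,c\in\mathbb Z\}$, and $r$-goodness at $x_0$ is the statement that $v$ coincides with one of these on $B_r(x_0)\cap E_{A,k}$. The function $o_A$ is recurrent on all of $\mathbb Z^2$ (Proposition~\ref{prop6}) and $v$ is recurrent on $E_{A,k}$ (as shown in Section~3); applying Definition~\ref{recurrentfunc} in both directions, with $w$ a translate of the opposing function and $Y\subset E_{A,k}$ arbitrary, shows that for every affine $\ell$ the function $v-o_A-\ell$ satisfies a maximum principle on $E_{A,k}$ and is therefore controlled by its values near $\partial E_{A,k}$. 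There, pointwise minimality of $v$ gives $v\le o_A+\ell+c$ whenever $o_A+\ell+c\in\mathcal S$ (using Proposition~\ref{prop7}); combined with $v|_{\partial E_{A,k}}=k$, the decomposition $o_A=q_A+L+p+c$ with $q_A|_{\partial\tilde E_{A,k}}=k$, and the matching of $\nabla v\approx Ax$ (the continuum gradient, $v$ having smooth part $q_A$) to $\nabla(o_A+\ell)$, this determines for each boundary arc the affine $\ell$ best fitting $v$ there: its slope is essentially fixed, while its constant part must oscillate with the pattern period to track $-p$ along $\partial\tilde E_{A,k}$. Hence, after subtraction of a locally optimal affine function, $v$ is pinned to within $O(1)$ of the pattern throughout $G_{r(k),A,k}$ except along a family of \emph{transition curves}.

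\textbf{Step 2: counting the defects.} Record the matched translate near each point by a field $\tau:E_{A,k}\to\mathbb Z^2/V\mathbb Z^3$; the non-$r$-good points all lie within $Cr(k)$ of a transition curve of $\tau$, so it remains to bound the total length of these curves. Using the maximum principle of Step~1 to forbid defects from nucleating in the interior --- so that every transition curve emanates from $\partial\tilde E_{A,k}$ and runs essentially straight across the ellipse --- the number of curves is comparable to the number of times $\tau$ changes along $\partial\tilde E_{A,k}$, and the total length is that number times $O(\operatorname{diam}\tilde E_{A,k})=O(\sqrt k)$. As the boundary turns, the slope $\nabla\ell$ of the best affine fit must advance through the lattice $V\mathbb Z^3$; to leading order the number of lattice steps is an integral of $\sqrt{\kappa(s)}$ (for $\kappa$ the curvature of $\partial\tilde E_{A,k}$) against an anisotropy weight determined by $A$, the square root appearing because the relevant slope varies quadratically in arclength. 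For the ellipse with semi-axes $r_i=\sqrt{2k/\lambda_i}$ this integral is $k^{1/4}$ times a shape constant, and optimizing where defects are cheapest produces exactly the dichotomy $\lambda_1\lessgtr 1/\sqrt2$ --- according to whether the near-flat part of $\partial\tilde E_{A,k}$, measured against the lattice spanned by the columns of $V$, lets defects run along one or along both periodicity directions of the pattern --- together with the closed forms for $g(A)$. Multiplying $O\big(g(A)k^{1/4}\big)$ curves by $O(\sqrt k)$ and by $O(r(k))$ bounds the non-$r$-good set.

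\textbf{The main obstacle} is pinning down the \emph{sharp} constant. Two technical constraints set the hypotheses: pattern stability needs $r(k)\ge 3|V|^3$, so that balls are large on the scale of the tiling and the $O(1)$ lattice errors wash out; and the error in the Step~1 sandwich over a ball --- genuinely of order $r(k)^2$, since $\partial\tilde E_{A,k}$ is curved and not lattice-aligned, so translates of $o_A$ fit only up to a curvature error --- must be negligible against the $\sqrt k$ scale over which the best-fit affine correction varies, which forces $r(k)=o(k^{1/4})$. The genuinely difficult point, however, is the lower direction of the defect count: one must show the transition curves are not appreciably more numerous, nor longer, than the curvature budget allows. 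This is a variational ``shortest-defect'' argument --- each defect carries a definite cost, and the total cost is pinned by the mismatch between the boundary data and the pattern, so neither extra defects nor wandering defects can be afforded --- followed by the explicit minimization over $\partial\tilde E_{A,k}$ relative to $V$ that yields $g(A)$ and its case split. The parameters $|V|$ and $\det V$ enter only the error terms and cancel from the leading constant, leaving an expression purely in $\lambda_1,\lambda_2$.
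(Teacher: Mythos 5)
Your proposal takes a genuinely different route from the paper, and as written it has a gap that I do not think can be closed. You propose to control the set of non-$r$-good points by showing that the \emph{defect set} is a union of transition curves that nucleate only at the boundary, run essentially straight, and number $O(g(A)k^{1/4})$, so that the bad points live in an $O(r)$-neighbourhood of a $1$-dimensional set of length $O(g(A)k^{3/4})$. None of this structure is established by your Step 1: the recurrence/maximum-principle argument yields only an $L^\infty$ bound on $v-o_A-\ell$ (this is exactly the content of Lemma 4.2 in the paper), and an $L^\infty$ bound on a difference of odometers says nothing about the geometry or total length of the set where $\Delta v$ fails to match a translate of $\Delta o_A$. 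The claims that defects cannot nucleate in the interior, that they are curves at all, and that their number is governed by an integral of $\sqrt{\kappa}$ against an anisotropy weight are empirical observations about the figures, not consequences of anything you prove; you acknowledge as much by deferring the ``genuinely difficult point'' to an unproven variational shortest-defect argument. That argument is the whole theorem.

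The paper avoids needing any structure theorem for the defect set. Its mechanism is: (i) the recurrence of $o$ and $v$ gives $\sup_{E_{A,k}}|v-o|\le h^2(A)\sqrt{k}+o(\sqrt{k})$, with $h^2(A)$ computed by maximizing $Q(z)+\tfrac{1}{\sqrt2}|z|-k$ over the inflated boundary $\partial\tilde F$ --- this maximization, not any property of the defect curves, is the source of the $\lambda_1\lessgtr 1/\sqrt2$ dichotomy (the maximum sits either at interior critical points or at the semi-minor axis); (ii) the Pegden--Smart touching lemma: for each $y$ in a shrunk ellipse, the paraboloid-lowered translate $\phi_y(x)=o(x)-\tfrac12\frac{|V|^2}{(2Cr)^2}|x-y|^2$ attains its minimum against $v$ within distance $O(h\,k^{1/4}r)$ of $y$ (balancing the $\sqrt k$ from (i) against the $r^{-2}$ curvature of the perturbation), and the touching point is $2r$-good; (iii) a weak-injectivity count plus continuum area estimates then bound the non-good fraction by the relative area of a collar of width $O(h\,r\,k^{1/4})$, giving $f\lesssim h(\sqrt{\lambda_1}+\sqrt{\lambda_2})\,r\,k^{-1/4}$. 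Your proposal correctly identifies the roles of the hypotheses $r\ge 3|V|^3$ and $r=o(k^{1/4})$ and the order of magnitude of the answer, but to make it a proof you would need to replace Step 2 entirely --- most plausibly by the touching-map argument, since no rigorous control of the defect set itself is currently available.
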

\label{theorem:mainresult}
In particular, the above implies that the fraction of $r(k)$-good points tends to 1, provided that $r(k)$ is $o(k^{1/4}).$ Note that we consider the fraction of points in $G_{r,A,k}$ (rather than $E_{A,k}$) in order to exclude points whose r-ball is not contained in the ellipse. The rate of $k^{1/4}$ appearing in the left hand side of the inequality in the theorem is a result of the $O(\sqrt{k})$ difference between the identity element's odometer function ($v$) and the integer superharmonic witness ($o$), detailed in \hyperref[Andrewlemma]{Lemma 4.4} below.

The proof is adapted from Theorem 10 in \cite{pegden2020stability}. We first use the recurrence of $o$ and $v$ to give an upper bound for $|v-o|$ on $E_{A,k}$ (Lemma 4.4). We then construct a 'touching map' whose range consists of good points (\hyperref[PSlemma]{Lemma 4.5}).  We finally estimate the area of the range of the touching map to give a lower bound on the number of r-good points.

\begin{lemma}
\label{Andrewlemma}
There exists a recurrent integer superharmonic witness $o$ for $A$ such that
\[
\sup_{x \in E_{A,k}}  |v(x) - o(x)|  \leq h^2(A) \sqrt{k} + o(\sqrt{k}),
\]
where
\[
h(A)^2 \coloneqq \begin{cases} 

      \sqrt{\frac{\lambda_1 + \lambda _2}{\lambda_1 \lambda_2 (1+2\lambda_1 \lambda_2)}} + 2\sqrt{\frac{\lambda_1 \lambda_2(\lambda_1 + \lambda _2)}{1+2\lambda_1 \lambda_2}}  & \lambda_1 < \frac{1}{\sqrt{2}} \\
      
      \frac{1}{\sqrt{\lambda_2}}+ \sqrt{2\lambda_2} & \lambda_1\geq \frac{1}{\sqrt{2}} 

   \end{cases}.
\]
\end{lemma}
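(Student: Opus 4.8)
\emph{Proof plan.} Write $E := E_{A,k}$, $v := v_{A,k}$ and $o := o_A$. Following \cite{pegden2020stability}, the idea is to exploit that \emph{both} $v$ and $o$ are recurrent on $E$ (Definition \ref{recurrentfunc}) in order to squeeze $|v-o|$ in the interior down to its size on the discrete boundary $\partial E$, and then to estimate that boundary size from the explicit decomposition $o = q_A + L + p + c$ of Proposition \ref{prop6}.

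Both recurrences are available. The function $v$ is recurrent on $E$ by the proposition in Section 3 establishing that the BVP solution is recurrent on $E_{A,k}$; since $v_{A,k}$ differs from that BVP solution by the constant $k$, which alters neither $\Delta v$ nor recurrence, we have $\Delta v \leq 3$ on $E$ and $v|_{\partial E}\equiv k$. The function $o$ is recurrent on all of $\mathbb{Z}^2$, hence on the finite set $E$: given a test function $w$ with $\Delta w \leq 3$ on a finite $Y\subset E$, set $M := \sup_{(E\setminus Y)\cup\partial E}(o-w)$ and replace $w$ off $E$ by $\widetilde w := \max(w,\,o-M)$; then $\widetilde w = w$ on $E\cup\partial E$, so $\Delta\widetilde w\leq 3$ still holds on $Y$, while $o-\widetilde w\leq M$ off $Y$, and applying $\mathbb{Z}^2$-recurrence of $o$ to $\widetilde w$ yields precisely the $E$-recurrence inequality for $w$. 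Also $\Delta o\leq 3$ everywhere.

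Now apply Definition \ref{recurrentfunc} twice, each time with $Y = X = E$: the recurrence of $v$ tested against $w = o$ gives $\sup_E(v-o)\leq\sup_{\partial E}(v-o)=\sup_{\partial E}(k-o)$, and the recurrence of $o$ tested against $w = v$ gives $\sup_E(o-v)\leq\sup_{\partial E}(o-v)=\sup_{\partial E}(o-k)$, so
\[
\sup_E|v-o|\;=\;\max\!\Big(\sup_E(v-o),\,\sup_E(o-v)\Big)\;\leq\;\sup_{x\in\partial E}\big|\,o(x)-k\,\big|,
\]
and the problem reduces to estimating the right-hand side to leading order in $k$. Write $o(x)=\tfrac12 x^TAx+b\cdot x+p(x)+c$ with $|p|,|c|=O(1)$. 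For $x\in\partial E$ with an adjacent $y\in E$ one has $|x-y|=1$ and $\tfrac12 y^TAy<k$, so a Taylor expansion gives $0\leq\tfrac12 x^TAx-k\leq(Ay)\cdot(x-y)+\tfrac{\lambda_2}{2}\leq|Ay|+O(1)$, while $|Ay|^2=y^TA^2y\leq\lambda_2\,y^TAy<2\lambda_2 k$, hence $0\leq\tfrac12 x^TAx-k\leq\sqrt{2\lambda_2 k}+O(1)$; and since every point of $\partial E$ lies within distance $1$ of the continuum ellipse $\partial\tilde E_{A,k}$, whose support function gives $\sup|b\cdot x|=\sqrt{2k\,b^TA^{-1}b}$, also $|b\cdot x|\leq\sqrt{2k\,b^TA^{-1}b}+O(1)$ on $\partial E$. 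Combining these---but optimizing the quadratic overshoot $\tfrac12 x^TAx-k$ \emph{jointly} with $b\cdot x$ along $\partial E$, parametrized by the outward unit normal (since the two maxima are attained at different places), and then substituting the explicit linear coefficient $b$ of the witness $o_A$ coming from the construction of \cite{levine2017apollonian} underlying Proposition \ref{prop6}---produces exactly $h^2(A)\sqrt k+o(\sqrt k)$, with the two regimes $\lambda_1<\tfrac1{\sqrt2}$ and $\lambda_1\geq\tfrac1{\sqrt2}$ arising from the two structurally different forms of $o_A$ (according to whether the governing Apollonian general circle is a genuine circle or a line).

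The reduction to $\sup_{\partial E}|o-k|$ is essentially formal once both recurrences are in hand, and it is robust: the recurrence of $o$ on $E$ is the only slightly delicate soft point, handled by the extension trick above. The main obstacle is the final computation---carrying out the joint optimization of the quadratic overshoot against the linear term over the \emph{discrete} boundary with the precise value of $b$, while keeping all lattice-versus-continuum discrepancies within $o(\sqrt k)$---and it is there that the bifurcation into the two cases appears and the exact constant $h^2(A)$ is pinned down.
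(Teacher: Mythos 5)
Your reduction to the boundary is exactly the paper's: both $v$ and $o$ are recurrent on $E_{A,k}$, so the maximum-principle inequality of Definition \ref{recurrentfunc} gives $\sup_E|v-o|\leq\sup_{\partial E}|o-k|$, and your extension trick justifying that $\mathbb{Z}^2$-recurrence of $o$ implies recurrence on the finite set $E$ is a careful touch the paper passes over. Your individual boundary estimates (the quadratic overshoot $\tfrac12 x^TAx-k\leq\sqrt{2\lambda_2 k}+O(1)$ via a neighbor in $E$, and the $O(1)$ discrepancy between $\partial E$ and the continuum ellipse) are also sound and consistent with the paper's use of the dilated ellipse $\tilde F_{1,A,k}$.

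The gap is in the final step, which is where the lemma actually lives: you assert that the constant $h^2(A)$ emerges from ``substituting the explicit linear coefficient $b$ of the witness $o_A$'' and that the two regimes reflect ``the two structurally different forms of $o_A$'' (circle versus line in the Apollonian packing). Neither is right, and the computation is not carried out. The paper never uses the explicit $b$; it normalizes $b$ modulo $\mathbb{Z}^2$ so that $|b_1|,|b_2|\leq\tfrac12$ (subtracting an integer vector changes neither the integrality, the growth, nor $\Delta o_A$), and then bounds the linear term by the worst case $|L(x)|\leq\tfrac1{\sqrt2}|x|$. This is essential: $h^2(A)$ depends only on $\lambda_1,\lambda_2$, which could not happen if a specific $b$ were substituted. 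The case split $\lambda_1<\tfrac1{\sqrt2}$ versus $\lambda_1\geq\tfrac1{\sqrt2}$ then comes from elementary calculus, not from the Apollonian geometry: one maximizes $M(x)=Q(z)+\tfrac1{\sqrt2}|z|-k$ over the outward-normal parametrization of $\partial\tilde F_1$, and the interior critical points $\pm x_2$ exist precisely when $1-2\lambda_1^2>0$; when they do they are the maxima, and when they do not the maximum sits at the semi-minor endpoint, giving $\tfrac1{\sqrt{\lambda_2}}+\sqrt{2\lambda_2}$. You correctly observe that the quadratic and linear maxima occur at different boundary points so a joint optimization is needed (adding your two separate suprema would give the strictly weaker $\sqrt{2\lambda_2}+\tfrac1{\sqrt{\lambda_1}}$ in the second regime), but the joint optimization is precisely the content you have deferred, and the mechanism you propose for completing it would not produce the stated $h^2(A)$.
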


\begin{proof}

Using \hyperref[prop6]{Proposition 6}, we write $o(x) = q(x) + L(x) + \rho(x)$, with $\rho(0) = 0$, and where we have dropped the subscript $A$ for convenience.

We first note that, since o and v are both recurrent on $E_{A,k}$ (see Propositions \hyperref[prop6]{6} and \hyperref[prop8]{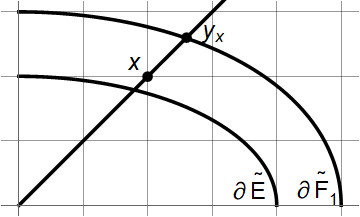}), we have
\[
\sup_{E_{A,k}} |o-v| \leq \sup_{\partial E_{A,k}} |o-v|.
\]
Thus it suffices to show that $\sup_{\partial E_{A,k}} |o-v| \leq h^2(A) \sqrt{k} + o(k^{1/2}).$ We have that 
\[
 \sup_{x \in \partial E}|o-v|(x) \leq \sup_{x \in \partial E} (q_A(x) + |L(x)| - k) + \sup_{x \in \partial E} | \rho(x) | ,
\]
since $v|_{\partial E_{A,k}} = k$ and $q_A |_{\partial E_{A,k}} \geq k$. 

First note that $\sup_{x \in \partial E} | \rho(x) | = o(\sqrt{k}),$ since $\rho$ is a periodic function. We then seek to bound the term $\sup_{x \in \partial E} (q_A(x) + |L(x)| - k)$. Note the following inclusion of sets: $\partial E_{A,k} \subset \tilde{F}_{1,A,k}$. This is apparent from the fact that for any $x \in \partial E_{A,k},$ $d(x,\tilde{E}_{A,k}) \leq 1$, since $x \sim y$ for some $y \in E_{A,k}.$ 
0
For each $x \in \partial E_{A,k}$ we can assign a point $y_x \in \partial \tilde{F}$ to it by letting $y_x$ be the point on $\partial \tilde{F}$ which also lies on the line passing through the origin and $x$. See Figure 8.

\begin{figure}
\label{FFigure}
 \centering
\includegraphics[width=3in]{8}\hspace{1em}%

\caption{$x \in \partial E$ and the corresponding $y_x \in \partial \tilde{F}_1.$}
\end{figure}

Write $L(x) = b \cdot x = b_1 x_1 + b_2 x_2.$ Without loss of generality, we take $|b_1| \leq \frac{1}{2}$ and $|b_2| \leq \frac{1}{2}.$ If this $\textit{wasn't}$ the case, we could subtract some vector $b' \in \mathbb{Z}^2$, so that the modified superharmonic representative is still integer valued, and still has the appropriate growth at infinity. The Laplacian of the integer superharmonic witness is unaffected by the change in linear term. Thus, for all $x$, we have the inequality 
\[
|L(x)| \leq \|b\|\|x\| \leq \frac{1}{\sqrt{2}}\|x\| \leq \frac{1}{\sqrt{2}}\|y_x\|,
\]
where in the last inequality we have used that x lies between the origin and $y_x$ on the line which determines $y_x.$

We bound $Q(x)$ from above in a similar manner, noting that, as vectors in $\mathbb{R}^2,$ $y_x = (1 + \epsilon) x$ for some $\epsilon \geq 0.$ Thus 
\[
Q(x) = \frac{1}{2} x^T A x \leq \frac{ (1+\epsilon)^2}{2} x^T A x  = Q(y_x).
\]
Thus we have, for any $x \in \partial E_{A,k},$ that $Q(x) + |L(x)| \leq Q(y_x) + \frac{1}{\sqrt{2}}|y_x|.$ We now seek to maximize $Q(y) + \frac{1}{\sqrt{2}}|y|$ for $y \in \partial \tilde{F}.$

We now let $(x_1,x_2)$ parametrize $\mathbb{R}^2,$ and switch to coordinates $(x_1',x_2')$ in which the matrix is diagonalized, so that the semi-major axis aligns with the $x_1'$ axis, while the semi-minor axis aligns with the $x_2'$ axis.

We then have that
\[
E_{A,k} = \{x \in \mathbb{Z}^2 \, : \, \frac{1}{2} x'^T A' x' < k\},
\]
where 
\[
A' = \begin{bmatrix} \lambda_1 & 0 \\ 0 & \lambda_2 \end{bmatrix}.
\]

In what follows, we work in the primed coordinate system, but drop the primes for ease of notation.

It now suffices to bound $Q(z) + \frac{1}{\sqrt{2}}|z|$ for $z \in \partial \tilde{F}$ with $z_2 \geq 0$, that is, to consider only the top half of $\partial \tilde{F}$, by symmetries of $Q(z) + \frac{1}{\sqrt{2}}|z|$.

We first write the top half of the ellipse, $\partial \tilde{E},$ as a function of x. We have that
\[
x_2 = \sqrt{\frac{2k-\lambda_1x_1^2}{\lambda_2}}.
\]
We next note that, at any point on the top of $\partial \tilde{E}$, the outward facing unit normal vector is given by
\[
n(x_1,x_2) = \frac{1}{\Delta} \begin{bmatrix} \lambda_1x_1 \\ \lambda_2x_2 \end{bmatrix},
\]
where $\Delta := \sqrt{\lambda_1^2x_1^2 + \lambda_2^2 x_2^2}.$

Now, all of the points $z \in \partial \tilde{F}^+$ can be written as $z = (x_1,x_2) + n(x_1,x_2).$ Thus, we now parametrize $\partial \tilde{F}^+$ by its $x_1$-coordinate in the following way:
\[
\partial \tilde{F}^+ = \Bigg\{\Big(x_1\big(1 + \frac{\lambda_1}{\Delta}), \, \sqrt{\frac{2k-\lambda_1x_1^2}{\lambda_2}} \big(1 + \frac{\lambda_2}{\Delta}\big) \Big) \quad | \quad -\sqrt{\frac{2k}{\lambda_1}} \leq x_1 \leq \sqrt{\frac{2k}{\lambda_1}}\Bigg\}.
\]
Using this parametrization, we first consider the linear term, $\frac{1}{\sqrt{2}}|z|,$ as a function of $x_1$. We have

\begin{align*}
\frac{1}{\sqrt{2}}|z| &= \frac{1}{\sqrt{2}}\Big( x_1^2\big(1+ \frac{\lambda_1^2}{\Delta^2} + \frac{2\lambda_1}{\Delta} \big) + \frac{2k-\lambda_1x_1^2}{\lambda_2}\big(1 + \frac{\lambda_2^2}{\Delta^2} + \frac{2\lambda_2}{\Delta} \big) \Big)^{1/2} \\ &= \frac{1}{\sqrt{2}} \Big( 1 + \big(x_1^2 + \frac{2k - \lambda_1x_1^2}{\lambda_2}\big) + \frac{4k}{\Delta} \Big)^{1/2}.
\end{align*}
Next, we seek to write Q(z) in the same manner. We have
\begin{align*}
Q(z) &= \frac{1}{2} z^T A z = \frac{1}{2} \Big( \lambda_1x_1^2\big(1 +  \frac{\lambda_1^2}{\Delta^2} + \frac{2\lambda_1}{\Delta} \big) + (2k-\lambda_1x_1^2)\big(1 + \frac{\lambda_2^2}{\Delta^2} + \frac{2\lambda_2}{\Delta} \big) \Big) \\ &= k + \Delta + \frac{\lambda_1^3 x_1^2 + \lambda_2^3 x_2^2}{2 \Delta^2}
\end{align*}
Now we have, for $z = (x_1,x_2) \in \partial \tilde{F}^+,$ that
\begin{align}
 \frac{1}{\sqrt{2}} |z| &+ Q(z) - k   \\&=  \frac{1}{\sqrt{2}} \Big( 1 + \big(x_1^2 + \frac{2k - \lambda_1x_1^2}{\lambda_2}\big) + \frac{4k}{\Delta} \Big)^{1/2} +  \Delta + \frac{\lambda_1^3 x_1^2 + \lambda_2^3 x_2^2}{2  \Delta^2} \nonumber \\& \leq  \frac{1}{\sqrt{2}} \Big( 1 + \big(x_1^2 + \frac{2k - \lambda_1x_1^2}{\lambda_2}\big) + \frac{4\sqrt{k}}{C_2} \Big)^{1/2} +  \Delta + \frac{C_1 }{2 C_2^2 } \nonumber \\ & \leq \frac{1}{\sqrt{2}} \Big(\big(x_1^2 + \frac{2k - \lambda_1x_1^2}{\lambda_2}\big) \Big)^{1/2} +  \Delta +  \frac{C_1 }{2 C_2^2 } + \Big(1 + \frac{4 \sqrt{k}}{C_2} \Big)^{1/2} \nonumber,
\end{align}
where we have used the fact that there exist constants $C_1$ and $C_2$ (which depend on $\lambda_1$ and $\lambda_2$ but not on $k$) such that, for all $z \in \partial \tilde{F}^+,$ $\Delta \geq C_2 \sqrt{k}$ and $\lambda_1^3 x_1^2 + \lambda_2^3 x_2^2 \leq C_1 k$.

We need to find the maximum value of the $x$-dependent terms, that is, of
\[
M(x_1) := \sqrt{\frac{\frac{2k}{\lambda_2}+x_1^2(1-\frac{\lambda_1}{\lambda_2})}{2}} + \Delta
\]
for $|x_1| \leq \sqrt{2k/\lambda_1}$. We find that 
\[
M'(x_1) = \frac{1}{\sqrt{2\lambda_2}}\frac{(\lambda_2-\lambda_1)x_1}{\sqrt{2k+(\lambda_2-\lambda_1)x_1^2}} - \frac{\lambda_1(\lambda_2-\lambda_1)x_1}{\sqrt{2k\lambda_2 - \lambda_1(\lambda_2-\lambda_1)x_1^2}},
\]
which is nonsingular on $|x_1| \leq \sqrt{2k/\lambda_1}$. For $\lambda_1 < \frac{1}{\sqrt{2}}$, the solutions to $M'(x_1) = 0$ are the following:
\begin{align*}
x_1 &= 0 \\
\pm x_1 & = \pm \nu := \pm \sqrt{2k\lambda_2} \frac{\sqrt{1-2\lambda_1^2}}{\sqrt{-\lambda_1^2 + \lambda_1\lambda_2 - 2\lambda_1^3 \lambda_2 + 2\lambda_1^2 \lambda_2^2}}.
\end{align*}
When $\lambda_1 \geq \frac{1}{\sqrt{2}},$ the only solution is 
\[
x_1 = 0.
\]
We further compute
\begin{align*}
M''(x_1) = &-\frac{\lambda_1^2(\lambda_2-\lambda_1)^2x_1^2}{(2\lambda_2k-\lambda_1(\lambda_2-\lambda_1)x_1^2)^{3/2}} - \frac{\lambda_1(\lambda_2-\lambda_1)}{(2\lambda_2k-\lambda_1(\lambda_2-\lambda_1)x_1^2)^{1/2}} \\& - \frac{\lambda_2^{3/2}(x_1-\frac{\lambda_1x_1}{\lambda_2})^2}{\sqrt{2}(2k+(\lambda_2-\lambda_1)x_1^2)^{3/2}} + \frac{\lambda_2-\lambda_1}{\sqrt{2\lambda_2} \sqrt{2k+(\lambda_2-\lambda_1)x_1^2}}.
\end{align*}
Note that $M''$ is nonsingular on the domain.

\textbf{Case 1:} $\lambda_1 < \frac{1}{\sqrt{2}},$ \newline Note that $M''(\pm \nu) < 0$ (where we have used that $\lambda_2 > 1$). Thus at $\pm \nu,$ $M'$ must switch sign from positive to negative, giving that $\pm \nu$ are absolute maxima on the domain in the case of $\lambda_1 < \frac{1}{\sqrt{2}}.$

Plugging $\pm \nu$ into $M$ and simplifying, we get
\[
M(\pm \nu) =  \sqrt{\frac{\lambda_1 + \lambda _2}{\lambda_1 \lambda_2 (1+2\lambda_1 \lambda_2)}k} + 2\sqrt{\frac{\lambda_1 \lambda_2(\lambda_1 + \lambda _2)}{1+2\lambda_1 \lambda_2}k}.
\]
Dividing by $\sqrt{k},$ the result follows.

\textbf{Case 2:} $\lambda_1 \geq \frac{1}{\sqrt{2}}$ \newline

Using that $\lambda_1 \geq \frac{1}{\sqrt{2}}$ and $\lambda_2 > 1,$ it is easily seen that $M(0) > M (\pm \sqrt{\frac{2k}{\lambda_1}})$. Again plugging $x_1 = 0$ into $M$ and simplifying, we get that
\[
M(0) = \frac{\sqrt{k}}{\sqrt{\lambda_2}}+ \sqrt{2\lambda_2 k}.
\] 
Combining the above two cases with inequality (2) and the material preceding, we see that 
\vskip 4mm
\hfill $\displaystyle \sup_{x \in E_{A,k}} |v(x) - o(x)| \leq  h(A)^2 \sqrt{k} + o(\sqrt{k}).$ \qedhere
\end{proof}

The next lemma, due to Pegden and Smart, states that if the quadratically-lowered integer superharmonic witness touches $v$ from below at 0 in $B_R,$ then the point 0 is $C^{-1}R$-good for some universal constant $C$. The proof of this lemma, found in \cite{pegden2020stability}, makes use of the maximum principle used in the \hyperref[recurrentfunc]{definition} of recurrent functions.

\begin{lemma} \cite[Lemma 14]{pegden2020stability},
\label{PSlemma}
There is a universal constant $C > 1$ such that, if
\newline
1) $R \geq C |V|^3$ \newline
2) $v: \mathbb{Z}^2 \rightarrow \mathbb{Z}$  is recurrent in $B_R$ \newline
3) $\psi_y(x) = o(x) - \frac{1}{2} \frac{|V|^2}{R^2}|x-y|^2 + k$ for some $k \in \mathbb{R}$ \newline
4) $\psi_y$ touches $v$ from below at $0$ in $B_R$, that is, $\max_{B_R} \psi_y - v = 0$ and this maximum is achieved at $0$ \newline
then $0$ is $C^{-1} R$-good.
\end{lemma}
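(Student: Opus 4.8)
The plan is to follow Pegden and Smart and show that the touching hypothesis forces $v$ to coincide with an integer translate of the pattern, up to an integer-affine function, on a ball $B_\rho$ with $\rho \geq C^{-1}R$; $(C^{-1}R)$-goodness is then immediate from the definition. First I would record the lower barrier coming from the touching. Since $\psi_y \leq v$ on $B_R$ with $\psi_y(0) = v(0)$, writing $Q(x) = \tfrac12 \tfrac{|V|^2}{R^2}|x-y|^2$ we get $v - o \geq k - Q$ on $B_R$ with equality at $0$. Expanding $Q$ peels off an affine function $\ell(x) = \tfrac{|V|^2}{R^2}\langle x,y\rangle$ and the constant $c_0 = v(0)-o(0)$, so that $v - o - \ell \geq c_0 - \tfrac12\tfrac{|V|^2}{R^2}|x|^2$ on $B_R$, with equality at $0$: the odometer sits above the pattern-plus-affine function, and the two kiss at the origin.

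The heart of the proof — and the step I expect to be the main obstacle — is to upgrade this one-point contact into exact agreement on a macroscopic ball. Following Pegden–Smart, this is done by propagating the contact region through the tile adjacency graph of Proposition 6. The essential structural inputs are: $\Delta o \equiv 3$ on every inner tile boundary $\overline{\partial}T + Vz$, so the pattern is rigid in the sense that there is no slack across a tile boundary; each tile has exactly six neighbours and $1 \leq |V|^2 \leq C\det V$; and the maximum principle built into the definition of a recurrent function, satisfied by both $v$ and $o$. Starting from the tile containing $0$, one argues tile by tile that $v$ must equal $o$ plus the same integer-affine function on ever larger clusters of tiles: if agreement held on one tile but failed on an adjacent one, the strict inequality $\Delta\psi_y \leq 3 - 2|V|^2/R^2 < 3$ together with the recurrence maximum principle applied near the shared inner boundary would drive $v$ below the lower barrier of the first step or into violating recurrence. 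The quantitative threshold $R \geq C|V|^3$ is exactly what renders the quadratic perturbation negligible at the per-tile scale across a ball of radius comparable to $R$, so the agreement cluster reaches radius $C^{-1}R$; carrying this out with the correct dependence on $|V|$ is the technical core of the Pegden–Smart argument, which is why I would cite it rather than reprove it here.

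Finally I would snap the affine function to integers. On $B_\rho$ the slope of $\ell$ is $O(|V|^2/R) = O(1/(C|V|)) < 1$, so $\ell + c_0$ differs from some integer-affine $\ell' + c$ by less than a full unit over the relevant scale, and the previous step yields $|v - (o + \ell' + c)| < 1$ on $B_\rho$ and its outer boundary $\partial B_\rho$. Now apply the recurrence maximum principle to the recurrent function $v$ with comparison function $w = o + \ell' + c$, which satisfies $\Delta w = \Delta o \leq 3$ on $B_\rho$: $\sup_{B_\rho}(v - w) \leq \sup_{\partial B_\rho}(v - w) < 1$, and symmetrically, using that $o + \ell' + c$ is recurrent on all of $\mathbb{Z}^2$, $\sup_{B_\rho}(w - v) < 1$. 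Since $v$ and $w$ are integer-valued this forces $v = o + \ell' + c$ on $B_\rho$, i.e.\ $0$ is $\rho$-good with $\rho \geq C^{-1}R$ after adjusting the universal constant. The delicate point throughout is the tile-by-tile propagation in the middle step; everything else is bookkeeping with the maximum principle and integrality.
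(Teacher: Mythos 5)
The paper does not prove this lemma at all: it is imported verbatim from Pegden--Smart, with the text only remarking that the proof ``makes use of the maximum principle'' in the definition of recurrent functions. Your proposal is a faithful high-level sketch of that external argument (lower barrier from the touching, tile-by-tile propagation of the contact set using $\Delta o_A = 3$ on $\overline{\partial}T + Vz$ and recurrence, then integrality snapping), but since you explicitly defer the technical core to the same citation, your approach is in substance identical to the paper's, namely an appeal to \cite{pegden2020stability}.
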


In order to estimate the fraction of r-good points, we will compare subsets of $\mathbb{Z}^2$ to their analogues in $\mathbb{R}^2,$ and use various geometric and measure-theoretic techniques in the continuum. We proceed by proving some properties of the sets $\tilde{G}$ and $\tilde{E}$ (which are subsets of $\mathbb{R}^2).$ 

\begin{lemma} $\tilde{G}_{L,A,k}$ is convex.
\label{convexlemma}
\end{lemma}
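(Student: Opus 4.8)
The plan is to show that $\tilde{G}_L = \{x \in \tilde{E} : d(x, \partial\tilde{E}) \geq L\}$ is the erosion of the convex body $\tilde{E}$ by a ball of radius $L$, and that such erosions preserve convexity. The first step is to rewrite $\tilde{G}_L$ in a more convenient form. Since $\tilde{E}_{A,k} = \{x : \tfrac12 x^T A x < k\}$ is an open, bounded, convex set (as $A$ is positive definite), for a point $x$ in its interior we have $d(x, \partial \tilde{E}) \geq L$ if and only if the closed ball $\overline{B}_L(x)$ is contained in $\tilde{E}$ (here one should be slightly careful: $d(x,\partial\tilde E)\ge L$ means no boundary point is within distance $L$, and since $x$ itself is inside the open convex set, the whole ball $\overline B_L(x)$ stays inside). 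Hence
\[
\tilde{G}_L = \{x \in \mathbb{R}^2 : \overline{B}_L(x) \subset \tilde{E}\} = \bigcap_{\|u\| \leq L} (\tilde{E} - u),
\]
where $\tilde{E} - u = \{y - u : y \in \tilde{E}\}$ is a translate of $\tilde{E}$.

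The second step is the observation that each translate $\tilde{E} - u$ is convex (translation preserves convexity), and an arbitrary intersection of convex sets is convex. Therefore $\tilde{G}_L$, being the intersection $\bigcap_{\|u\|\le L}(\tilde E - u)$ of convex sets, is convex. This is the entire argument; the only real content is justifying the set-identity above, which follows from the triangle inequality in one direction ($\overline{B}_L(x) \subset \tilde{E}$ forces $d(x,\partial\tilde E)\ge L$) and from convexity/openness of $\tilde{E}$ in the other (if every point within distance $L$ of $x$ avoids $\partial\tilde E$, then since $x$ lies in the open set $\tilde E$ and $\tilde E$ is connected with connected complement in the relevant sense, the whole ball lies in $\tilde E$ — alternatively, directly: for $\|u\|\le L$, the segment from $x$ to $x+u$ has length $\le L$, so it cannot cross $\partial\tilde E$, hence $x+u\in\tilde E$).

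I do not expect a genuine obstacle here; the statement is a standard fact about inner parallel bodies of convex sets. The one point requiring a little care is the equivalence between "$d(x,\partial\tilde E)\ge L$" and "$\overline B_L(x)\subseteq\tilde E$" — this uses that $\tilde E$ is open and convex (so in particular path-connected and with the property that $x\in\tilde E$ together with no boundary crossing on a path forces the path's endpoint into $\tilde E$). If one prefers to avoid even this, one can instead define things via $\tilde G_L=\bigcap_{\|u\|\le L}(\tilde E-u)$ directly and note it agrees with the stated definition; either way convexity is immediate from the intersection representation.
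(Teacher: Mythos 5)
Your proof is correct, and it takes a genuinely different route from the paper's. You represent $\tilde{G}_L$ as the erosion of $\tilde{E}$ by an $L$-ball, i.e.\ as an intersection of translates of $\tilde{E}$, and conclude convexity from the fact that an arbitrary intersection of convex sets is convex. The paper instead fixes two points of $\tilde{G}_L$ and an arbitrary point $(x_3,y_3)\in\partial\tilde{E}$, expands the squared distance from the convex combination to that boundary point, and bounds the cross term $(x_1-x_3,y_1-y_3)\cdot(x_2-x_3,y_2-y_3)$. Your argument is more robust: it uses only that $\tilde{E}$ is open and convex (not that it is an ellipse), works in any dimension, and sidesteps the sign analysis of that inner product on which the paper's computation hinges (and which, as printed there, has its inequalities running in the direction opposite to the one needed). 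The one point to tighten in your write-up is the closed-ball identity: if $d(x,\partial\tilde{E})=L$ exactly, then $x\in\tilde{G}_L$ but $\overline{B}_L(x)$ meets $\partial\tilde{E}$ and so is not contained in the open set $\tilde{E}$. Replace closed balls by open ones, writing $\tilde{G}_L=\tilde{E}\cap\bigcap_{\|u\|<L}(\tilde{E}-u)$ (your own segment argument proves exactly this), or else intersect translates of the closure $\overline{\tilde{E}}$; either way the conclusion is unaffected, since intersections of convex sets are convex.
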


\begin{proof}
Let $P,Q \in \tilde{G}_{L,A,k}$ be arbitrary. We would like to show that for all points $S \in \overline{PQ},$ $B_L(S) \subset \text{cl} (\tilde{E}_{L,A,k})$ where $\overline{PQ}$ is the line segment connecting $P$ and $Q$, and cl denotes topological closure. Of course, $S \in \tilde{E}_{L,A,k},$ by convexity of $\tilde{E}_{L,A,k}$ and $\tilde{G}_{L,A,k} \subset \tilde{E}_{L,A,k}.$

Now, let $T$ be the midpoint of the line segment $\overline{PQ},$ and consider a coordinate parametrization $(x_1,x_2)$ of $\mathbb{R}^2$ where $T$ sits at the origin, and $P$ and $Q$ sit on the negative and positive $x_1$ axes, respectively. Consider also a topologically open rectangle $R \subset \mathbb{R}^2$ centered on the origin ($T$) with sides parallel to the coordinate axes, height $2 L$ and width $|\overline{PQ}|,$ so that the segment $\overline{PQ}$ bisects the rectangle horizontally. Note that each of the corners of $R$ are exactly a distance $L$ from one of the points $P$ or $Q$, so that the corners of $R$ are all contained in the closure of $\tilde{E}_{L,A,k}$. Then by convexity of $ \text{cl} (\tilde{E}_{L,A,k}),$ $R \subset  \text{cl} (\tilde{E}_{L,A,k}).$ This gives that for any point $x \in B_L(S) \cap R,$ $x \in  \text{cl} (\tilde{E}_{L,A,k}).$

Now consider a point $x \in B_L(S) \cap R^c,$ with coordinates $(x_1, x_2).$ Without loss of generality, let $x_1 \geq 0.$ Let $S$ and $Q$ have coordinates $(s_1,s_2)$ and $(q_1,q_2)$ respectively. It now suffices to show that $x$ is within distance $L$ of $Q,$ because $B_L(Q) \subset \text{cl} (\tilde{E}_{L,A,k}).$ We have:
\begin{align*}
d^2(x,Q) &= (x_1 - q_1)^2 + (x_2-q_2)^2 = (x_1 - q_1)^2 + (x_2-s_2)^2 \\ &\leq (x_1 - s_1)^2 + (x_2-s_2)^2 < L^2.  \qedhere
\end{align*}
\end{proof}

The following construction connects the relevant discrete sets with their continuum counterparts.

For any $x = (x_1,x_2) \in \mathbb{Z}^2,$ define $x^\Box \subset \mathbb{R}^2$ as the closed square of unit length centered on $x$, i.e.
\[
x^\Box = \big[x_1-\frac{1}{2},x_1+\frac{1}{2}\big] \times \big[x_2 -\frac{1}{2}, x_2+\frac{1}{2}\big] \subset \mathbb{R}^2.
\] 
Next, for any number $L,$ define the sets
\begin{align*}
A &= \{x \in \mathbb{Z}^2 \, : \, x^\Box \subset \tilde{G_L}\} \\
B &= \cap_{B_i \in \mathcal{B}} B_i,
\end{align*}
where 
\[
\mathcal{B} = \{C \subset \mathbb{Z}^2 \, : \, \cup_{x \in C} x^\Box \supset \tilde{G}_L \}.
\]
Further, define
\begin{align*}
\tilde{A} &= \cup_{x \in A} x^\Box \\
\tilde{B} &= \cup_{x \in B} x^\Box
\end{align*}
That is, $\tilde{A}$ is the largest union of unit squares (centered around lattice points) such that $\tilde{A} \subset \tilde{G_L},$ and $\tilde{B}$ is the smallest union of unit squares (centered around lattice points) so that $\tilde{B} \supset \tilde{G_L}.$ 

The following lemma is due to Levine:

\begin{lemma}
\label{construction}
 $|\tilde{B}| - |\tilde{A}| \leq 16|\partial \tilde{E}_k|$, where $|\partial \tilde{E}_k|$ is the length of the circumference of the ellipse and $|\tilde{A}|$ and $|\tilde{B}|$ are the areas of $\tilde{A}$ and $\tilde{B}$ respectively.
\end{lemma}

\begin{proof} First, note that for all $x \in B-A,$ $d(x,\partial \tilde{G}) \leq \frac{1}{\sqrt{2}},$ since $x^\Box \not\subset \tilde{G}.$ Thus, $B_1(x)$ intersects $\partial \tilde{G}$ in an arc of length at least $2-\sqrt{2},$ where $B_1(x)$ is the ball of radius 1 centered at $x$.  Define $\alpha_x := B_1(x) \cap \partial \tilde{G}$ (see \hyperref[ArcFigure]{Figure 9}). Next, note that for every point $z \in \partial \tilde{G},$ $z$ lies on at most 9 arcs $\alpha_x.$ This follows by noting that for any point $z \in \mathbb{R}^2,$ $B_1(z) \subset B_2 (z'),$ where $z'$ is a closest lattice point to z. $B_2(z')$ contains exactly 9 points, thus there are no more than 9 points with distance less than 1 from z. We then write, noting that $B-A$ is a finite set, and using $|B-A|$ to denote the number of points in $B-A$:
\begin{figure}
\label{ArcFigure}
 \centering
\includegraphics[width=3in]{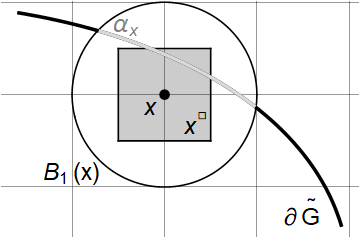}\hspace{1em}%
\caption{A schematic for Lemma 4.5. $x \in B - A$. }
\end{figure}
\[
(2-\sqrt{2}) |B-A| \leq \sum_{x \in B-A} |\alpha_x| = \int_{\partial \tilde{G}} \sum_{x \in B-A} \mathbbm{1}_{\alpha_x}(z) dz \leq 9 \int_{\partial \tilde{G}} dz = 9 |\partial \tilde{G}|.
\]

We conclude the proof by noting that the closures of the sets $\tilde{G}_k \subset \tilde{E}_k$ are compact and convex (\hyperref[convexlemma]{Lemma 4.6}). Thus $|\partial \tilde{G}| \leq |\partial \tilde{E}|$ (see, e.g., \cite{stefani2016monotonicity}). Since $ \frac{9}{2-\sqrt{2}} < 16,$ we are done. 
\end{proof}
The final lemma is a calculation of the area of the set $\tilde{G}.$
\begin{lemma}
\label{jacobianlemma}
For constant $L$ and for sufficiently large k, 
\[
|\tilde{E} - \tilde{G}_{Lr(k)k^{1/4}}| = Lr(k)k^{1/4} ( C_E - \pi L r(k) k^{1/4}),
\]
where $C_E$ is the circumference of the ellipse, and the vertical bars above refer to the Lebesgue measure of the continuum set.

\end{lemma}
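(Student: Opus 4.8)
The plan is to compute $|\tilde E - \tilde G_\ell|$, with $\ell := Lr(k)k^{1/4}$, by exhibiting the annular region $\tilde E \setminus \tilde G_\ell$ as the image of an explicit normal map off the boundary ellipse and then applying the change-of-variables formula; this is where the ``jacobian'' in the name of the lemma comes from.

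First I would set up coordinates on the boundary. Parametrize $\partial \tilde E$ by arclength, $\gamma : [0,C_E] \to \mathbb{R}^2$, where $C_E$ is its total length; let $T(\tau) = \gamma'(\tau)$ be the unit tangent, $n(\tau)$ the outward unit normal, and $\kappa(\tau) > 0$ the curvature. Since $\partial \tilde E$ is a smooth, strictly convex, simple closed curve, the total curvature is $\int_0^{C_E} \kappa(\tau)\,d\tau = 2\pi$. Define
\[
\Phi : [0,C_E] \times [0,\ell] \to \mathbb{R}^2, \qquad \Phi(\tau,s) = \gamma(\tau) - s\,n(\tau).
\]
The Frenet relations give $\partial_\tau \Phi = (1 - s\kappa(\tau))\,T(\tau)$ and $\partial_s \Phi = -n(\tau)$, so, as $T(\tau) \perp n(\tau)$ are unit vectors, the Jacobian determinant of $\Phi$ equals $1 - s\kappa(\tau)$, of absolute value $|1 - s\kappa(\tau)|$.

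Second, I would verify that for all sufficiently large $k$ the map $\Phi$ is a bijection onto $\{x \in \tilde E : d(x,\partial\tilde E) \le \ell\}$, which differs from $\tilde E \setminus \tilde G_\ell$ by a Lebesgue-null set. This is the standard tubular-neighborhood argument (each interior point within distance $\ell$ of $\partial\tilde E$ lies on a unique inward normal segment, using strict convexity), and it is valid precisely when $\ell$ is strictly below the minimal radius of curvature $\rho_{\min}$ of $\partial\tilde E$; in that range one also has $1 - s\kappa(\tau) > 0$ throughout the domain, so the absolute value may be dropped. For the ellipse with semi-axes $r_1 = \sqrt{2k/\lambda_1} > r_2 = \sqrt{2k/\lambda_2}$ one computes $\rho_{\min} = r_2^2/r_1 = \sqrt{2k\lambda_1}/\lambda_2 = \Theta(\sqrt{k})$, whereas $\ell = Lr(k)k^{1/4} = o(\sqrt{k})$ because $r(k) = o(k^{1/4})$; hence $\ell < \rho_{\min}$ (indeed $\ell < r_2$, so $\tilde G_\ell$ is nonempty) once $k$ is large enough. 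The convexity of $\tilde G_\ell$ (Lemma 4.4) can be used to streamline the identification of the image of $\Phi$.

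Finally, for such $k$ the change of variables yields
\[
|\tilde E - \tilde G_\ell| = \int_0^\ell \int_0^{C_E} \big(1 - s\kappa(\tau)\big)\,d\tau\,ds = \int_0^\ell \big(C_E - 2\pi s\big)\,ds = C_E\,\ell - \pi \ell^2 = Lr(k)k^{1/4}\big(C_E - \pi L r(k) k^{1/4}\big),
\]
which is the claimed identity. The only substantive step is the second one: establishing that the normal map is a genuine bijection onto the annular region. That is exactly where the hypotheses ``$r(k) = o(k^{1/4})$'' and ``for sufficiently large $k$'' are used — they force $\ell < \rho_{\min}$ — and everything else is the routine integral above together with the elementary fact that the total curvature of a convex closed plane curve is $2\pi$.
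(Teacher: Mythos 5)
Your proof is correct and takes essentially the same approach as the paper's: both fiber the annular region $\tilde{E} \setminus \tilde{G}_{\ell}$ by inward normal segments from $\partial \tilde{E}$, use $\ell = Lr(k)k^{1/4} = o(\sqrt{k}) < \rho_{\min} = \sqrt{2k\lambda_1}/\lambda_2$ (from $r(k)=o(k^{1/4})$) to guarantee the normal map is injective for large $k$, and then integrate the Jacobian over the tube. The only difference is cosmetic: you parametrize $\partial\tilde{E}$ by arclength so the integral collapses via the total-curvature identity $\int_0^{C_E}\kappa\,d\tau = 2\pi$, while the paper uses the elliptic angle $\theta$ and an explicit (messier) Jacobian; the computations are equivalent.
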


\begin{proof}

Consider, for every point $x \in \partial \tilde{E}$, a line segment $\ell_x$ of length $L r k^{1/4}$ running inward normal to $\partial \tilde{E}.$ Let $z_x$ refer to the end of $\ell_x$ which lies inside the ellipse. We would like to prove that for sufficiently large $k,$ the end of each line segment will coincide with a point on the boundary of $\tilde{G}_{Lrk^{1/4}},$ and that these segments will be non-intersecting (i.e. $\ell_x \cap \ell_y = \emptyset$ for $x \neq y.$) Certainly, $\partial \tilde{G}_{Lrk^{1/4}}$ will not coincide with interior points of any line segment, lest the conditions defining $\tilde{G}_{Lrk^{1/4}}$ be violated. Then, $z_x$ will lie on $\partial \tilde{G}_{Lrk^{1/4}}$, provided that there does not exist a $y \in \partial \tilde{E}, y \neq x$ such that $d(y,z_x) < d(x,z_x).$ This will be satisfied if $Lrk^{1/4} < \inf_{p \in \partial \tilde{E}} r_p,$ where $r_p$ is the radius of curvature of $\partial \tilde{E}$ at a point $p \in \partial \tilde{E}$. The smallest radius of curvature occurs at the points on the semi-major axis, where $r_p = \frac{\sqrt{2k\lambda_1}}{\lambda_2}.$ Thus we require that $L r k^{1/4}<  \frac{\sqrt{2k\lambda_1}}{\lambda_2}.$ This criterion also suffices for non-intersection of the line segments. Of course, this condition will be met in the limit of large $k$, where we note that $r = o(k^{1/4}).$ We then use coordinates $(\theta,n)$, where $\theta$ gives points on the ellipse according to $x=\big(\sqrt{\frac{2k}{\lambda_1}}\cos(\theta),\sqrt{\frac{2k}{\lambda_2}}\sin(\theta)\big)$, and $n$ gives the position on the fiber $\ell_x,$ with $n=0$ coinciding with $\partial \tilde{E}$. Note that $\theta$ does not give the angle from the positive horizontal axis in general. In these coordinates, the Jacobian determinant becomes 
\[
J = \frac{-n r_1 r_2}{r_2^2 \cos^2(t) + r_1^2 \sin^2(t)} + r_1 r_2 \sqrt{\frac{\cos^2(t)}{r_1^2} + \frac{\sin^2(t)}{r_2^2}}.
\]
The area $|\tilde{E} - G_{Lrk^{1/4}}|$ then becomes the integral of the Jacobian over $0 \leq \theta < 2 \pi$ and $0 \leq n \leq Lr k^{1/4}$, giving $Lrk^{1/4} (C_E-L r k^{1/4} \pi),$ as desired. Note that the nonnegativity of the Jacobian (for sufficiently large $k$) on the region of integration follows from $r = o(k^{1/4}).$ 

\end{proof}

\begin{proof}[Proof of Theorem 4.3]
Consider the set $G_{7Ch(A)rk^{1/4},A,k} \subset \mathbb{Z}^2,$ with $h(A)$ defined in  \hyperref[Andrewlemma]{Lemma 4.4} (and referred to henceforth as $h$), C the constant from Lemma 4.5, and the set $G_{L,A,k}$ given in \hyperref[setdefinition]{Definition 4.1}. Consider the function
\[
\phi_y(x) = o(x) - \frac{1}{2}\frac{|V|^2}{(2Cr)^2}|x-y|^2
\]
for $y \in G_{7Chrk^{1/4}}$. Note that, for all sufficiently large $k$, $v(y) - \phi_y(y) = v(y) - o(y) \leq h^2  \sqrt{k} + o(\sqrt{k})$ (from Lemma 4.4). Furthermore, for $z \in E - B_{3h k^{1/4} \cdot 2Cr}(y),$ we have
\begin{align*}
(v-\phi_y)(z) &= (v-o)(z) + \frac{1}{2}\frac{|V|^2}{(2Cr)^2}|z-y|^2 \\&\geq -h^2 \sqrt{k}  + \frac{9}{2}|V|^2h^2 \sqrt{k} + o(\sqrt{k}) \\&\geq 3 h^2 \sqrt{k} + o(\sqrt{k}),
\end{align*}
where we've used that $|V|^2 \geq 1$ from \hyperref[prop6]{Proposition 6}.
Combining the two previous inequalities gives that, for sufficiently large $k$, $(v-\phi_y)$ attains its minimum in E on the set $ B_{6Ch k^{1/4} r}(y)$, say at the point $x_y.$ Thus $\phi_y$ can be translated to touch $v$ from below in $E$ at $x_y.$ In particular, for $k$ sufficiently large (so that $6Chk^{1/4} r(k) \geq 2 C r(k)$), $\phi_y$ can be translated to touch $v$ from below in $B_{2Cr(k)}(x_y)$ at $x_y$. We next apply  \hyperref[PSlemma]{Lemma 4.5} (taking $R = 2Cr(k)$) to see that $x_y$ is $2r(k)$-good. Following \cite{pegden2020stability}, we refer to the map $\theta: y \mapsto x_y$ as the 'touching map'.

Since, by Lemma 4.5, points in the range of the touching map are guaranteed to have a ball surrounding them which witness the matching of the sandpile identity element with the pattern given by $\Delta o_A$, we would like to estimate the number of points in the range of the touching map. If the touching map were injective, we would have that the number of good points is equal to $|G_{7Ch(A)rk^{1/4}}|$, the size of the domain of the touching map. However, this map is not injective in the standard sense; we instead make use of a weaker form of injectivity.

$\textbf{Claim \cite{pegden2020stability}:}$  For every $y \in G_{7Chrk^{1/4}},$ there are sets $y \in T_y \subset E$ and $x_y \in S_y \subset B_{|V|}(x_y)$ such that $|T_y| \leq |S_y|$ and $S_y \cap S_{\tilde{y}} \neq \emptyset$ implies $S_y = S_{\tilde{y}}$ and $T_y = T_{\tilde{y}}.$ 

Following \cite{pegden2020stability}, we choose a $\mathcal{Y} \subset G_{7Chrk^{1/4}}$ maximal subject to $\{S_y | y \in \mathcal{Y}\}$ being disjoint. We then have
\[
|\cup_{y \in \mathcal{Y}} S_y| \geq \sum_{y \in \mathcal{Y}} |T_y| \geq |\cup_{y \in \mathcal{Y}} T_y| \geq |G_{7Chrk^{1/4}}|,
\]
where we've used that $|S_y| \geq |T_y|$ for all y, the subadditivity of the measure, and the above claim for each of the inequalities, respectively. Lastly, note that for all $y$, every point in $S_y$ is r-good. This follows because every $S_y$ contains a 2r-good point, the diameter of $S_y$ is at most $2|V|,$ and  $r \geq 3|V|.$ We thus have that the fraction of good points in $G_r$ is at least $\frac{|G_{7Chrk^{1/4}}|}{|G_r|}$. 

We now aim to estimate this fraction by instead considering the areas of the continuum counterparts of the above sets; $|\tilde{G}_{7Chrk^{1/4}}|$ and $|\tilde{G}_r|$. Note that
\[
|G_L| - |\tilde{G}_L| \leq |B| - |\tilde{A}| =|\tilde{B}| - |\tilde{A}| \leq 16 |\partial \tilde{E}_k|
\]
and
\[
|\tilde{G}_L|-|G_L| \leq |\tilde{B}| - |A| = |\tilde{B}| - |\tilde{A}| \leq 16 |\partial \tilde{E}_k|,
\]
where $A, B, \tilde{A}, \tilde{B}$ are given in the construction before \hyperref[construction]{Lemma 4.7} for fixed $L$, and we appeal directly to Lemma 4.7 for the last inequality in each of the above two lines. Thus $\bigl||\tilde{G}_L| -|G_L| \bigr| \leq 16 |\partial \tilde{E}_k|$. This gives:
\begin{align*}
\frac{|G_{7Chrk^{1/4}}|}{|G_r|}& \geq \frac{1}{|G_r|} (|\tilde{G}_{7Chrk^{1/4}}| -  \bigl| |G_{7Chrk^{1/4}}| - |\tilde{G}_{7Chrk^{1/4}}| \bigr|)\\& \geq  \frac{1}{|G_r|} (|\tilde{G}_{7Chrk^{1/4}}| -  16 |\partial \tilde{E}_k|) \geq \frac{|\tilde{G}_{7Chrk^{1/4}}| -  16 |\partial \tilde{E}_k|}{|\tilde{G}_r| + \bigl| |\tilde{G}_r |-|G_r|\bigr|}
\end{align*}

We now consider the fraction of points which are \textbf{not} r-good. By the above, this fraction is bounded from above by 
\begin{align*}
&\frac{|\tilde{G}_r| + \bigl| |\tilde{G}_r |-|G_r|\bigr| - |\tilde{G}_{7Chrk^{1/4}}| + 16 |\partial \tilde{E}_k|}{|\tilde{G}_r| + \bigl| |\tilde{G}_r |-|G_r|\bigr|} \leq \frac{|\tilde{G}_r| - |\tilde{G}_{7Chrk^{1/4}}| + 32 |\partial \tilde{E}_k|}{|\tilde{G}_r|} 
\\&= \frac{ |\tilde{E}_k - \tilde{G}_{7Chrk^{1/4}}|- |\tilde{E}_k - \tilde{G}_r|}{|\tilde{E}_k|- |\tilde{E}_k - \tilde{G}_r|} + \frac{32|\partial \tilde{E}_k|}{|\tilde{E}_k|- |\tilde{E}_k - \tilde{G}_r|}
\end{align*}

Appealing now to \hyperref[jacobianlemma]{Lemma 4.8}, the fraction of r-bad points in $G_r$ is at most
\[
\frac{7Chrk^{1/4} C_E - \pi (7Chr)^2 k^{1/2}- r C_E + \pi r^2}{\pi \frac{2k}{\sqrt{\lambda_1 \lambda_2}} -r C_E + \pi r^2 } +  \frac{32|\partial \tilde{E}_k|}{\pi \frac{2k}{\sqrt{\lambda_1 \lambda_2}} -r C_E + \pi r^2}
\]
Now, using the estimate $C_1(r_1+r_2) \leq C_E \leq C_2(r_1+r_2)$ (for some constants $C_1$ and $C_2$) and manipulating, we get: 
\begin{align*}
f(k,r) &\leq \frac{(7ChrC_2 - \frac{C_1 r}{k^{1/4}})\sqrt{\frac{2}{\lambda_1 \lambda_2}}(\sqrt{\lambda_1}+\sqrt{\lambda_2}) + \pi (\frac{r^2}{k^{3/4}} - \frac{49C^2h^2r^2}{k^{1/4}})}{\frac{2\pi}{\sqrt{\lambda_1 \lambda_2}}k^{1/4} - \frac{r}{k^{1/4}} C_2 \sqrt{\frac{2}{\lambda_1 \lambda_2}} (\sqrt{\lambda_1}+\sqrt{\lambda_2}) + \frac{r^2}{k^{3/4}} \pi} \\&+ \frac{1}{k^{1/4}} \frac{32 C_2 \sqrt{\frac{2}{\lambda_1 \lambda_2}} (\sqrt{\lambda_1} + \sqrt{\lambda_2})}{\frac{2\pi}{\sqrt{\lambda_1 \lambda_2}}k^{1/4} - \frac{r}{k^{1/4}} C_2 \sqrt{\frac{2}{\lambda_1 \lambda_2}} (\sqrt{\lambda_1}+\sqrt{\lambda_2}) + \frac{r^2}{k^{3/4}} \pi}
\end{align*}
Multiplying by $\frac{k^{1/4}}{r}$ and taking the limit as $k \rightarrow \infty,$ we obtain
\[
\limsup_{k \rightarrow \infty} f(k,r) \cdot \frac{k^{1/4}}{r} \leq C  h (\sqrt{\lambda_1} + \sqrt{\lambda_2}),
\]
where we have absorbed all constants into $C.$ This proves the result. \qedhere
\end{proof}

We now wish to generalize our result slightly. Note that the graph $E_{A,k}$ is formed by taking the intersection of the square grid with an ellipse centered at the origin. We note that our result can be extended to ellipses centered at an arbitrary point in $\mathbb{R}^2,$ with Theorem 4.3 still holding (only with different, weaker constants $g(A),$ see \hyperref[offcenterfigure]{Figure 3}).

When replacing $(0,0)$ with an arbitrary centerpoint $p = (p_1,p_2) \in \mathbb{R}^2,$ we note that only Lemma 4.4 changes, with the rest of the proof of Theorem 4.3 being carried out as above. We now discuss the adaptations that can be made to Lemma 4.4 in order to accommodate the recentered ellipse.

First note that the statement
\begin{equation}
\sup_{E^p} |o-v| \leq \sup_{\partial E^p} |o-v| \leq -k +  \sup_{\partial E^p} \big(  q(x) + |L(x)| \big)  + \sup_{\partial E^p} p(x)
\end{equation}
still holds, as both $v$ and $o$ are still recurrent on $E^p,$ and $v \equiv k$ on $\partial E^p$ by definition. Without loss of generality, we take $p \in [-\frac{1}{2},\frac{1}{2}) \times [-\frac{1}{2},\frac{1}{2}),$ since we can always translate the witness $o_A,$ and $v$ is the solution to a $\mathbb{Z}^2-$translation-invariant BVP. 

Consider the second term in the rightmost expression above. As in \hyperref[Andrewlemma]{Lemma 4.4}, we can bound the linear function $L(x)$ by $\frac{1}{\sqrt{2}} |x|.$ Next consider the quadratic term $q(x).$ Define $q'(x) = \frac{1}{2} (x-p)^T A (x-p).$ We then have
\[
q(x) = q'(x) - \frac{1}{2} p^TA p + p^T A x \leq q'(x) + p^T A x \leq q'(x) + \frac{1}{\sqrt{2}} \lambda_2 |x|,
\]
where we have used Cauchy-Schwarz and that $\lambda_2$ is the largest eigenvalue of $A$ in the last inequality. We now have
\[
 \sup_{\partial E^p} \big(  q(x) + |L(x)| \big) \leq  \sup_{\partial E^p} \big(  q'(x) + \frac{1}{\sqrt{2}} (1 + \lambda_2) |x| \big) \leq \sup_{\partial E^p}   q'(x) +  \sup_{\partial E^p} \frac{1}{\sqrt{2}} (1 + \lambda_2) |x|.
\]

Heuristically speaking, the worst-case scenario for $q'(x)$ occurs when there is a point $y_x$ lying on $\partial E^p$ with unit distance from $x \in E^p,$ with both points lying along the semi-minor axis. This gives 
\[
q'(y_x) = \frac{1}{2}(r_2+1)^2 v_2^TA v_2 = \frac{\lambda_2}{2}(\sqrt{\frac{2k}{\lambda_2}}+1)^2 = k + \sqrt{2 k \lambda_2} + \frac{\lambda_2}{2}
\]
Plugging back into equation (3), we see that
\begin{align*}
\sup_{E^p} |o-v| &\leq \sqrt{2 k \lambda_2} + \sup_{\partial E^p} \frac{1}{\sqrt{2}} (1 + \lambda_2) |x| + \sup_{\partial E^p} p(x) + \frac{\lambda_2}{2}
\end{align*}
On $\partial E^p,$ $|x|$ can be bounded by $\sqrt{\frac{2k}{\lambda_1}} + 1 + \frac{1}{\sqrt{2}}$. This is due to the fact that, for a boundary point $x \in \partial E^p,$ $d(0,x) \leq d(0,p) + d(p,x).$ The terms $\sqrt{\frac{2k}{\lambda_1}} + 1$ come from $d(p,x),$ as it is the semimajor axis of the ellipse plus one, which accommodates for the farthest that a boundary point can be away from the interior of the ellipse. The $\frac{1}{\sqrt{2}}$ term bounds $d(p,0).$  

This gives that 
\[
\sup_{E^p} |o-v| \leq \big(\sqrt{2\lambda_2} + \frac{1 + \lambda_2}{\sqrt{\lambda_1}}\big) \sqrt{k}+ o(\sqrt{k}).
\]
The proof of Theorem 4.3 now proceeds as above, with the above inequality taking the place of Lemma 4.4, and the constant $\sqrt{2\lambda_2} + \frac{1 + \lambda_2}{\sqrt{\lambda_1}}$ taking the place of $h^2$.

\section{Open Questions}

Related open questions involve identity elements for non-maximal integer superharmonic ellipses. For example, take two maximal matrices $A$ and $B$, corresponding to Apollonian circles which are tangent. Consider a straight line running between these two centers in the $\{z=2\}$ plane, and then consider the inverse image of the projection operator from $\partial \Gamma$ onto the $\{z=2\}$ plane. What do the identity elements of matrices lying on this curve in $\partial \Gamma$ look like? In particular, what does the identity element of the matrix at the point of tangency look like? 

As we deviate from the peak of a cone (a maximal matrix) down the side of the cone in $\partial \Gamma$, experiments reveal that defects in the identity element appear in the form of stripes (\hyperref[BadEllipseFigure]{Figure 6}). Future work may include explorations of the rates at which stripes appear, and a characterization of the stripes appearing as we descend from a peak in $\partial \Gamma$ in arbitrary directions down the cone. 
\begin{figure}
\label{circlefigure}
 \centering
  \includegraphics[width=1.5in]{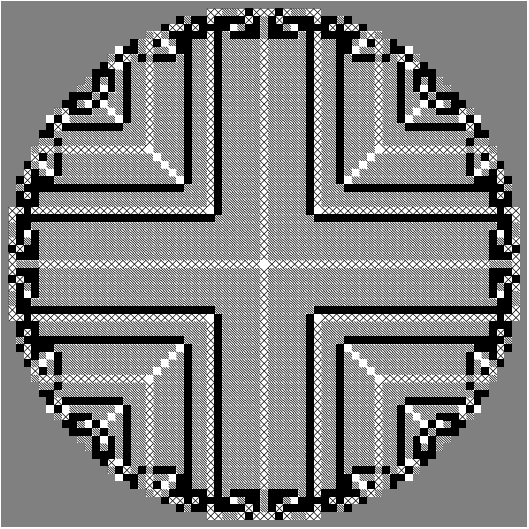}\hspace{.5em}%
 
\caption{The identity element of $B_{1152}(0) \cap \mathbb{Z}^2$.}
\end{figure}
We may also attempt to characterize the identity element of a circular region of the square grid. The matrix $\mathbb{I}_2$ which gives the circle is not itself a maximal integer superharmonic matrix, but is the limit of such matrices. Experiments reveal that the identity element is a constant background of $\Delta v = 2$, with some line-shaped defects in the interior of the circle (see \hyperref[circlefigure]{Figure 10}). Note further that vertical and horizontal lines only feature sites with 1 or 3 grains of sand, while the diagonal lines have 0 grains of sand.

\section{Acknowledgments}
The author thanks Lionel Levine, who conjectured this result, for his guidance throughout the process of writing this paper. The author also wishes to thank Swee Hong Chan for his helpful discussions, as well as the authors of \cite{levine2017apollonian} for their permission to use Figures 4 and 5. The author was partially supported by NSF grant DMS-1455272.

\medskip
Received xxxx 20xx; revised xxxx 20xx.
\medskip

\end{document}